\theoremstyle{plain}
\newtheorem{thm}{Theorem}[section]
\newtheorem{lem}{Lemma}[section]
\theoremstyle{definition}
\newtheorem{dfn}{Definition}[section]
\theoremstyle{definition}
\newtheorem{rmk}{Remark}[section]
\numberwithin{equation}{section}
\newcommand{\R}{\mathbb{R}}
\newcommand{\C}{\mathbb{C}}
\newcommand{\cc}[1]{\overline{#1}}
\newcommand{\op}[1]{\mathcal{#1}}
\newcommand{\pa}{\partial}
\newcommand{\eps}{\varepsilon}
\newcommand{\jb}[1]{\langle #1 \rangle}
\newcommand{\realpart}{{\rm Re\,}}
\newcommand{\imagpart}{{\rm Im\,}}
\newcommand{\dis}{\displaystyle}
\begin{document}
%%%%%%%%%%%%%%%%%%%%%%%%%%%%%%%%%%%%%%%%%%%%%%%%%%%%%%%%%%%%%%%%%%%%%%%%%%%%%%%
\title{
Upper and lower $L^2$-decay bounds for a class of 
derivative nonlinear Schr\"odinger equations\\
 }

%-----------------------
\author{
          %\underline{Chunhua Li}  \thanks{
             Chunhua Li  \thanks{
              Department of Mathematics, College of Science,
              Yanbian University.
              977 Gongyuan Road, Yanji, Jilin 133002, China.
              (E-mail: {\tt sxlch@ybu.edu.cn})
             }
          \and
          %\underline{Yoshinori Nishii} \thanks{
          Yoshinori Nishii\thanks{
             Department of Mathematics, 
              Tokyo University of Science. 
              1-3 Kagurazaka, Shinjuku-ku, Tokyo 162-8601, Japan.
              (E-mail: {\tt yoshinori.nishii@rs.tus.ac.jp}) 
             }
          \and
          %\underline{Yuji Sagawa}  \thanks{
          Yuji Sagawa \thanks{
              Department of Mathematics, 
              Chiba Institute of Technology.
              2-1-1 Shibazono, Narashino, Chiba 275-0023, Japan. 
              (E-mail: {\tt sagawa.yuji@p.chibakoudai.jp})
             }
          \and
          %\underline{Hideaki Sunagawa} \thanks{
          Hideaki Sunagawa \thanks{
              Department of Mathematics, Graduate School of Science, 
              Osaka Metropolitan  University.
              3-3-138 Sugimoto, Sumiyoshi-ku, Osaka 558-8585, Japan. 
              (E-mail: {\tt sunagawa@omu.ac.jp})
             }
}%-----------------------

\date{\today }
\maketitle

%-----------------------
\noindent{\bf Abstract:}\ 
We consider the initial value problem for cubic derivative 
nonlinear Schr\"odinger equations possessing weakly dissipative structure
in one space dimension. 
We show that the small data solution decays like $O((\log t)^{-1/4})$ 
in $L^2$ as $t\to +\infty$. Furthermore, we find that this $L^2$-decay rate 
is optimal by giving a lower estimate of the same order. 
\\

%-----------------------
\noindent{\bf Key Words:}\
Derivative nonlinear Schr\"odinger equation; 
Weakly dissipative structure; Optimal $L^2$-decay rate.
\\

%-----------------------
\noindent{\bf 2010 Mathematics Subject Classification:}\
35Q55, 35B40.\\

%-----------------------
%\noindent{\bf Running title:}\ 
%Upper and lower $L^2$-decay bounds for a class of DNLS

%%%%%%%%%%%-----------------------------------------------------------------
\section{Introduction and the main results}  \label{sec_intro}
%%%%%%%%%%%-----------------------------------------------------------------
This article is devoted to the study on large-time behavior of solutions to 
the initial value problem for one-dimensional derivative nonlinear Schr\"odinger equation 
in the form 
\begin{align}
i\pa_t u + \tfrac{1}{2}\pa_x^2 u=N(u,\pa_x u),
\qquad  t>0,\ x\in \R
\label{eq}
\end{align}
with 
\begin{align}
u(0,x)=\varphi(x), 
\qquad  x\in \R,
\label{data}
\end{align}where $i=\sqrt{-1}$ $\pa_t=\pa/\pa t$, $\pa_x=\pa/\pa x$, and 
$\varphi$ 
is a prescribed $\C$-valued function on $\R$. 
The nonlinear term $N(u,\pa_x u)$ is a cubic homogeneous polynomial in  
$(u,\cc{u}, \pa_x u, \cc{\pa_x u})$ with complex coefficients (which we refer to 
a ``cubic nonlinear term'' throughout this article). 
Our goal is to refine the previous result \cite{LNSS3}  
from the viewpoint of $L^2$-decay rate  under 
a certain structural condition on $N$ mentioned below. 

Before getting into the details, let us recall why and how we are interested 
in this problem.
As is well-known, cubic nonlinearity gives a borderline between short-range 
and long-range situation for the one-dimensional nonlinear Schr\"odinger 
equations. 
To see typical nonlinear effects, let us first focus on the power-type 
nonlinearity case 
\begin{align}
 i\pa_t u + \tfrac{1}{2}\pa_x^2 u = \lambda |u|^2 u.
 \label{nls_1}
\end{align}
According to Hayashi--Naumkin \cite{HN1}, the small data solution $u(t,x)$ 
to  \eqref{nls_1} with $\lambda \in \R\backslash\{0\}$ behaves like 
\[
 u(t,x)=\frac{1}{\sqrt{it}} \alpha^{\pm} (x/t) 
 e^{i\{\frac{x^2}{2t}  -\lambda  |\alpha^{\pm}(x/t)|^2 \log t \}}
 +o(t^{-1/2})
\quad \mbox{in} \ \ L^{\infty}(\R_x)
\]
as $t\to \pm \infty$, where  $\alpha^{\pm}$ is a suitable 
$\C$-valued function on $\R$. 
The additional logarithmic correction in the phase reflects a typical 
long-range character. 
If $\lambda\in \C$ in \eqref{nls_1}, another kind of long-range effect can 
be observed.
Indeed, it is shown in  \cite{Shim} that the small data solution $u(t,x)$ 
to \eqref{nls_1} decays like 
$O(t^{-1/2}(\log t)^{-1/2})$ in $L^{\infty}(\R_x)$ as $t\to +\infty$ 
if $\imagpart\lambda<0$. 
This gain of additional logarithmic time decay should be interpreted as 
another kind of long-range effect 
(see also \cite{CH}, \cite{CHN}, \cite{HLN}, \cite{HLN2}, \cite{Hoshino}, 
\cite{Hoshino2}, \cite{JJL}, \cite{KLS}, \cite{Kim}, \cite{KitaLi}, 
\cite{KitaNak}, \cite{KitaShim}, \cite{LNSS1}, \cite{LNSS2}, \cite{OgSat}, 
\cite{Sat1}, \cite{Sat2}, and so on). 
Time decay in $L^2$-norm is also investigated by several authors. 
Among others, it is pointed out by Kita-Sato \cite{KitaSato} that 
 optimal $L^2$-decay rate is $O((\log t)^{-1/2})$ in the case of 
\eqref{nls_1} with $\imagpart\lambda<0$ (see also Section~\ref{sec_strict} 
below for the related issue). 

Next we turn our attentions to more general derivative nonlinearity case 
\eqref{eq}. In what follows, we make an assumption that
\begin{align}
 N(e^{i\theta},0) =e^{i \theta} N(1,0), \qquad \theta \in \R,
\label{weak_gi}
\end{align}
to avoid the worst terms $u^3$, $\cc{u}^3$, $u\cc{u}^2$ 
(it is known that these three terms are quite difficult to handle in the 
present setting, and we do not pursue this case here). 
As pointed out in \cite{SagSu} (see also \cite{Su2}, \cite{Su3}), 
typical results on  large-time  behavior 
of global solutions to \eqref{eq} under \eqref{weak_gi} can be summarized in 
terms of the function $\nu:\R\to \C$ defined by 
\begin{align*}%\label{def_nu}
 \nu(\xi)=\frac{1}{2\pi i} \oint_{|z|=1} N(z,i\xi z) \frac{dz}{z^2}
\end{align*}
(see \ref{sec_app_a} for some backgrounds on $\nu(\xi)$).
For instance, 
%---------------------------
\begin{itemize}
%----------
\item
Small data global existence holds in $C([0,\infty);H^3\cap H^{2,1})$ 
under the condition
\begin{align}
\imagpart \nu(\xi) \le 0,\quad \xi \in \R,
\tag{${\bf A}$}
\end{align}
where $H^s$ denotes the $L^2$-based Sobolev space of order $s$, 
and the weighted Sobolev space $H^{k,m}$ is defined by 
$\{\phi \in L^2\,|\, \jb{\, \cdot \,}^{m} \phi \in H^k\}$ with 
$\jb{x}=\sqrt{1+x^2}$. 
See e.g., \cite{HN2}, \cite{HNS}, etc., for details on global existence.
%----------
\item
The global solution has (at most) logarithmic phase correction if 
\begin{align}
\imagpart \nu(\xi) = 0,\quad \xi \in \R.
\tag{${\bf A}_0$}
\end{align}
Also it is not difficult to see that  there is no $L^2$-decay under 
(${\bf A}_0$) for generic initial data of small amplitude.
%----------
\item
$L^2$-decay of the global solution does occur under the condition 
\begin{align}
\sup_{\xi \in \R} \imagpart \nu(\xi)<0.
\tag{${\bf A}_+$}
\end{align}
In \cite{LiS} and \cite{LNSS3}, an $L^2$-decay estimate of order  
$O((\log t)^{-3/8+\delta})$ with $\delta>0$ is derived, 
but it seems far from optimal in view of several recent results 
(such as \cite{HLN}, \cite{HLN2}, \cite{Sat1}, \cite{KitaSato}, etc.) 
on the power-type nonlinearity case. 
See Remark~\ref{rmk_upper_str} below for more details. 
%----------
\item
Under the stronger condition 
\begin{align}
\imagpart \nu(\xi) \le -C_*\jb{\xi}^2, \quad \xi \in \R 
\tag{${\bf A}_{++}$}
\end{align}
with some $C_*>0$, we have $L^2$-decay of order $O((\log t)^{-1/2})$, 
according to \cite{LiS}. 

\end{itemize}
%---------------------------

In the previous paper \cite{LNSS3}, we have pointed out that an interesting 
case is not covered by these classifications, that is the case where  
$({\bf A})$ is safistied but $({\bf A}_0)$ and $({\bf A}_+)$ are violated 
(for example, if $N=-i|u_x|^2u$, we can easily check that 
$\imagpart \nu(\xi)=-\xi^2\le 0$, 
while the inequality is not strict because of vanishing at $\xi=0$). 
This is what we are going to address here.

To going further, let us recall the following lemma given in \cite{LNSS3}.
%--------------------------------------
\begin{lem} \label{lem_weak_dissip}
Let $N$ be a cubic nonlinear term satisfying 
\eqref{weak_gi}.
If $({\bf A})$ is safistied but $({\bf A}_0)$ and $({\bf A}_+)$ are violated, 
then there exist $c_0>0$ and $\xi_0\in \R$ such that 
$\imagpart \nu(\xi)=-c_0(\xi-\xi_0)^2$. 
The converse is also true.
\end{lem}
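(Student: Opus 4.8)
The plan is to reduce the whole statement to an elementary analysis of a real polynomial of low degree, so the first and decisive step is to record that $\nu$ is a polynomial in $\xi$ of degree at most three. Indeed, on $|z|=1$ the substitution defining $\nu$ sends $\cc{u}$ to $z^{-1}$, $\pa_x u$ to $i\xi z$, and $\cc{\pa_x u}$ to $-i\xi z^{-1}$, so that $N(z,i\xi z)$ becomes a Laurent polynomial in $z$ whose coefficients are polynomials in $\xi$; the contour integral $\frac{1}{2\pi i}\oint_{|z|=1}(\,\cdot\,)\,z^{-2}\,dz$ merely extracts the coefficient of $z^{1}$ in $N(z,i\xi z)$. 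Since $N$ is cubic, each factor $\pa_x u$ or $\cc{\pa_x u}$ contributes exactly one power of $\xi$ and there are at most three such factors, whence $\nu(\xi)=\sum_{k=0}^{3}a_k\xi^k$ with $a_k\in\C$. Consequently $P(\xi):=\imagpart\nu(\xi)=\sum_{k=0}^{3}(\imagpart a_k)\,\xi^k$ is a real polynomial of degree at most three, and the entire lemma becomes a question about nonpositive real cubics.

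For the forward implication I would run a short case analysis on $P$ under the standing hypothesis $({\bf A})$, that is, $P(\xi)\le 0$ for all $\xi\in\R$, so that $P$ is bounded above on all of $\R$. A real polynomial bounded above on $\R$ is either constant or of even, positive degree with negative leading coefficient; since $\deg P\le 3$, this forces $\deg P\le 2$ with nonpositive quadratic coefficient $b_2:=\imagpart a_2$. Two cases then arise. If $b_2=0$, then $P$ reduces to a nonpositive constant, which is exactly $({\bf A}_0)$ when the constant is zero and $({\bf A}_+)$ when it is negative; both are excluded by hypothesis, so this branch is empty. Otherwise $b_2<0$, and completing the square gives $P(\xi)=b_2(\xi-\xi_0)^2+M$ with $\xi_0=-\tfrac{\imagpart a_1}{2b_2}$ and $M=\max_{\R}P$; here $({\bf A})$ yields $M\le 0$, while the exclusion of $({\bf A}_+)$ forces $M=0$. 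Hence $P(\xi)=-c_0(\xi-\xi_0)^2$ with $c_0:=-b_2>0$, which is the desired conclusion.

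The converse is immediate: if $\imagpart\nu(\xi)=-c_0(\xi-\xi_0)^2$ with $c_0>0$, then $\imagpart\nu\le 0$ so $({\bf A})$ holds, while $\imagpart\nu(\xi_0)=0$ together with $\imagpart\nu(\xi)<0$ for $\xi\ne\xi_0$ shows that $({\bf A}_0)$ fails (the function is not identically zero) and that $({\bf A}_+)$ fails (its supremum equals $0$). The only genuinely non-routine point is the degree bound in the first step; once the polynomial structure of $\nu$ is in hand—which could alternatively be quoted from the background material on $\nu$ referenced in the appendix—the remainder is a standard exercise on nonpositive real polynomials, and I do not anticipate any essential difficulty.
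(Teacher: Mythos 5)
Your proof is correct, and it follows the only natural route: the paper itself does not reprove this lemma (it is recalled from [LNSS3]), but the decisive structural fact you establish --- that $\nu(\xi)=\lambda_1+i(\lambda_2-\lambda_3)\xi+(\lambda_4-\lambda_5)\xi^2+i\lambda_6\xi^3$ is a polynomial of degree at most three, obtained by extracting the coefficient of $z^1$ --- is exactly what the paper's appendix computes, and the remaining case analysis on a real polynomial of degree at most three that is bounded above (hence of degree $0$ or $2$ with nonpositive leading coefficient, with the constant branch excluded by the failure of $({\bf A}_0)$ and $({\bf A}_+)$, and the maximum pinned to $0$ in the quadratic branch) is the standard argument. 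No gaps.
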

%--------------------------------------
This lemma naturally leads us to the following definition. 

%-----------------
\begin{dfn} \label{dfn_wd}
We say that a cubic nonlinear term $N$  
is {\em{weakly dissipative}} if  the following two conditions are satisfied: 
\begin{itemize}
\item[(i)] $N(e^{i\theta},0) =e^{i \theta} N(1,0)$ for $\theta \in \R$.
\item[(ii)]  There exist $c_0>0$ and $\xi_0\in \R$ such that 
$\imagpart \nu(\xi)=-c_0(\xi-\xi_0)^2$.
\end{itemize}
\end{dfn}
%------------------

In \cite{LNSS3}, it has been shown  that
\begin{align}
\|u(t)\|_{L^2}\le \frac{C\eps}{(1+ \eps^2\log (t+1))^{1/4-\delta}}
\label{est_LNSS3}
\end{align}
for $t\geq 0$ with an arbitrarily small $\delta>0$, provided that $N$ is 
weakly dissipative and $\eps =\|\varphi\|_{H^3\cap H^{2,1}}$ is sufficiently 
small. 
However, it is not clear whether this estimate is sharp or not. Indeed, the 
proof in \cite{LNSS3} heavily relies on the pointwise estimate for 
$\alpha(t,\xi)=\op{F}[\op{U}(-t)u(t,\cdot)](\xi)$ in the form 
\begin{align*}
 |\alpha(t,\xi)| 
\le 
\frac {C\eps}{(\eps^2 \log t)^{\theta/2}}
\frac {1}{|\xi-\xi_0|^{\theta}\jb{\xi}^{2-2\theta}}, 
\quad \theta\in [0,1]
%\label{est_alpha_LNSS3}
\end{align*}
for $t\geq 2$, where $\op{F}$ denotes the Fourier transform and $\op{U}(t)$ 
is the free Schr\"odinger propagater 
(see Section~\ref{subsec_reduction} below). 
This estimate allows us to reduce the problem to finding  
the admissible range of the parameter $\theta$ for convergence of the integral 
\begin{align*}
\int_{\R} \frac{d\xi}{|\xi-\xi_0|^{2\theta} \jb{\xi}^{4-4\theta}}.
\end{align*}
A nuisance exponent $\delta>0$ comes from the choice of $\theta=1/2-2\delta$. 

The aim of this article is  to remove 
an extra $\delta>0$ from \eqref{est_LNSS3} by an alternative approach. 
We will also show that the $L^2$ decay rate $O((\log t)^{-1/4})$ of global 
solutions to \eqref{eq} is optimal in the weakly dissipative case.

Our first result is as follows.
%--------------------------------------
\begin{thm} \label{thm_upper}
Suppose that  $N$ is weakly dissipative 
and that $\eps =\|\varphi\|_{H^3\cap H^{2,1}}$ is sufficiently small. 
Then there exists a positive constant $C$, not depending on $\eps$, 
such that the global solution $u$ to \eqref{eq}--\eqref{data} satisfies
\[
\|u(t)\|_{L_x^2}\le \frac{C\eps}{(1+ \eps^2\log (t+1))^{1/4}}
\] 
for $t\ge 0$.
\end{thm}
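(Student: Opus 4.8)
The plan is to recast the problem as a single differential inequality for the total mass $E(t)=\|\alpha(t)\|_{L^2}^2=\|u(t)\|_{L^2}^2$ of the profile $\alpha(t,\xi)=\op{F}[\op{U}(-t)u(t,\cdot)](\xi)$, and to extract the sharp rate from an ODE comparison rather than from the pointwise interpolation used in \cite{LNSS3}. Using the reduction carried out in Section~\ref{subsec_reduction}, the profile obeys, for $t\ge t_0$,
\begin{align*}
 i\pa_t \alpha(t,\xi) = \frac{1}{t}\nu(\xi)\,|\alpha(t,\xi)|^2\alpha(t,\xi) + R(t,\xi),
\end{align*}
where the remainder satisfies a bound of the form $\|R(t)\|_{L^2}\le C\eps^3 t^{-1-\gamma}$ for some $\gamma>0$; I will also use the a priori bound $\|\alpha(t)\|_{L^\infty}\le C\eps$, which is nothing but the $\theta=0$ case of the pointwise estimate recalled above. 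Differentiating $E$ and observing that in $\pa_t|\alpha|^2$ only the dissipative part $\imagpart\nu(\xi)=-c_0(\xi-\xi_0)^2$ survives (the real part of $\nu$ merely rotates the phase of $\alpha$ and is irrelevant for $E$), I obtain
\begin{align*}
 E'(t) = -\frac{2c_0}{t}\int_\R (\xi-\xi_0)^2 |\alpha(t,\xi)|^4\,d\xi + 2\realpart \int_\R \cc{\alpha(t,\xi)}\,R(t,\xi)\,d\xi.
\end{align*}

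The key new ingredient is a functional inequality that converts the frequency-degenerate dissipation into a genuine decay mechanism for $E$. I claim the elementary estimate
\begin{align*}
 \int_\R (\xi-\xi_0)^2 A(\xi)^2\,d\xi \ge \frac{1}{24\,\|A\|_{L^\infty}}\left(\int_\R A(\xi)\,d\xi\right)^3, \qquad A\ge 0,
\end{align*}
which I would prove by minimizing the left-hand side under the constraints $\int_\R A = E$ and $0\le A\le \|A\|_{L^\infty}$; the convex variational problem has minimizer $A=\min(\|A\|_{L^\infty},\lambda(\xi-\xi_0)^{-2})$, and evaluating the functional there gives exactly the stated constant. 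Applying this with $A=|\alpha(t,\cdot)|^2$ and $\|A\|_{L^\infty}\le C^2\eps^2$ bounds the dissipation below by $c_1 E(t)^3/\eps^2$, while the remainder contributes at most $|2\realpart\int_\R\cc\alpha R|\le 2E^{1/2}\|R\|_{L^2}\le C\eps^3 t^{-1-\gamma}E^{1/2}$. This yields the closed inequality
\begin{align*}
 E'(t) \le -\frac{c_2}{t\,\eps^2}\,E(t)^3 + C\eps^3 t^{-1-\gamma}E(t)^{1/2}.
\end{align*}

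Finally, I would close the estimate by comparison with $\Phi(t)=C_0\eps^2(1+\eps^2\log(t+1))^{-1/2}$. A direct computation shows that $\Phi$ is a supersolution of the above inequality once $C_0$ is large and $\eps$ small: the leading dissipative term $c_2 t^{-1}\eps^{-2}\Phi^3\sim \eps^4 t^{-1}(1+\eps^2\log t)^{-3/2}$ dominates the forcing $\eps^3 t^{-1-\gamma}\Phi^{1/2}\sim \eps^4 t^{-1-\gamma}(1+\eps^2\log t)^{-1/4}$, because the polynomial gain $t^{-\gamma}$ beats any power of $\log t$, uniformly in small $\eps$. Choosing $C_0$ so that $E(t_0)\le\Phi(t_0)$ and invoking the comparison principle gives $E(t)\le\Phi(t)$ for $t\ge t_0$, while $\|u(t)\|_{L^2}\le C\eps$ on $[0,t_0]$ follows from dissipativity; taking square roots yields the asserted bound. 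I expect the main obstacle to be establishing the remainder estimate $\|R(t)\|_{L^2}\lesssim \eps^3 t^{-1-\gamma}$ with a genuine power gain, which requires a normal-form/integration-by-parts argument in $t$ applied to the non-resonant cubic terms and controlled through the weighted a priori bounds on $\alpha$. By contrast, the functional inequality above is elementary, yet it is precisely the step that removes the loss $\delta$ inherent in the pointwise interpolation of \cite{LNSS3}.
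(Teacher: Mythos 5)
Your argument is essentially correct, but it takes a genuinely different route from the paper's. The paper feeds the reduced profile equation into Lemma~\ref{lem_asymp_prof}, which (via the decomposition $\beta=P/\sqrt{Q}$) produces the \emph{exact} large-time asymptotics $\beta(t,\xi)\approx A(\log t,\xi)$ with $|A(\tau,\xi)|^2=|A(0,\xi)|^2/(1+2c_0(\xi-\xi_0)^2|A(0,\xi)|^2\tau)$, and then estimates $\|A(\log t)\|_{L^2_\xi}^2$ by the elementary integral bound of Lemma~\ref{lem_int1} (splitting at $|\xi-\xi_0|\sim (\log t)^{-1/2}$). You instead derive a single Riccati-type differential inequality for the mass and close it with the reverse-interpolation inequality $\int(\xi-\xi_0)^2A^2\,d\xi\ge (\int A\,d\xi)^3/(24\|A\|_{L^\infty})$ plus ODE comparison. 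That inequality is correct: the convex minimization over $\{0\le A\le M,\ \int A=E\}$ has minimizer $\min(M,\lambda(\xi-\xi_0)^{-2})$ and value $E^3/(24M)$, and convexity guarantees this stationary point is the global minimum. It is in effect dual to Lemma~\ref{lem_int1}: the paper bounds the explicit solution of the pointwise ODE after integrating in $\xi$, while you bound the dissipation functional before integrating in $t$. Your route avoids the delicate $P$--$Q$ asymptotics entirely for the upper bound, which is a real simplification; its drawback is that it yields no lower bound, so the paper's machinery is still needed for Theorem~\ref{thm_lower}.

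Two points need attention. First, the obstacle you flag at the end --- a remainder bound with a genuine power gain in $t$ --- is already available from the reduction: Lemma~\ref{lem3.1} (quoted from \cite{LNSS3}, \cite{SagSu}) gives $|R(t,\xi)|\le C\eps^3 t^{-1-\kappa}\jb{\xi}^{-2}$ with $0<\kappa<1/4$, hence $\|R(t)\|_{L^2_\xi}\le C\eps^3 t^{-1-\kappa}$, together with the $L^\infty$ bound $|\alpha(t,\xi)|\le C\eps\jb{\xi}^{-2}$. Second --- and this is the only substantive correction --- the reduction does \emph{not} give the clean equation for $\alpha$ itself but for $\beta=\alpha-\sigma$ with $\|\sigma(t)\|_{L^2_\xi}\le C\eps^3 t^{-1/2}$; you should therefore run the energy argument on $\tilde E(t)=\|\beta(t)\|_{L^2_\xi}^2$ (noting that $\|\beta(t)\|_{L^\infty_\xi}\le C\eps$ follows from the bounds on $\alpha$ and $\sigma$) and transfer back to $\|u(t)\|_{L^2_x}=\|\alpha(t)\|_{L^2_\xi}$ by the triangle inequality at the end. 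With that adjustment, and after making the supersolution inequality strict so that the first-crossing comparison argument survives the non-Lipschitz $E^{1/2}$ forcing term, your proof closes. (Cosmetically, the remainder contribution to $E'$ is $2\imagpart\int\cc{\beta}R\,d\xi$ rather than $2\realpart$; the bound $2E^{1/2}\|R\|_{L^2}$ is unaffected.)
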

%--------------------------------------

To state our second result, we put a small parameter $\eps$ in front of
the initial data explicitly so that the information on the amplitude 
is separated from the others,
that is, we replace the initial condition \eqref{data} by
\begin{align}
 u(0,x) =\eps \psi(x),\qquad x\in \R,
\label{data_eps}
\end{align}
where $\psi\in H^{3}\cap H^{2,1}$ is independent of $\eps$. 
Then we have the following.

%--------------------------------------
\begin{thm} \label{thm_lower}
Suppose that $N$ is weakly dissipative 
and  that the Fourier transform of $\psi$ does not 
vanish at the point $\xi_0$ coming from {\rm (ii)} in 
Definition~\ref{dfn_wd}.
Then we can choose  $\eps_0>0$ such that 
the global solution $u$ to \eqref{eq}--\eqref{data_eps} 
satisfies
\[
 \liminf_{t\to +\infty}((\log t)^{1/4}\|u(t)\|_{L_x^{2}})>0
\]
for  $\eps \in (0,\eps_0]$.
\end{thm}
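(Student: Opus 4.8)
The plan is to exploit that the dissipation encoded in $\imagpart \nu(\xi)=-c_0(\xi-\xi_0)^2$ is \emph{weak} precisely near $\xi=\xi_0$, so that the profile of the solution cannot decay there, and then to show that this non-decaying region, although it shrinks like $(\log t)^{-1/2}$, still contributes enough mass to the $L^2$-norm to force the lower bound. First I would pass to the profile $\alpha(t,\xi)=\op{F}[\op{U}(-t)u(t,\cdot)](\xi)$, for which unitarity of $\op{F}$ and of $\op{U}(-t)$ on $L^2$ gives the exact identity $\|u(t)\|_{L_x^2}=\|\alpha(t,\cdot)\|_{L_\xi^2}$. From the reduction of Section~\ref{subsec_reduction} I would use that $\alpha$ obeys
\[
 i\pa_t\alpha(t,\xi)=\frac{1}{t}\nu(\xi)|\alpha(t,\xi)|^2\alpha(t,\xi)+R(t,\xi),
\]
together with the a priori bound $\sup_\xi|\alpha(t,\xi)|\le C\eps$ (take $\theta=0$ in the pointwise estimate quoted above) and a remainder $R$ that is integrable in time in $L_\xi^\infty$, say $\sup_\xi|R(t,\xi)|\le C\eps^3 t^{-1-\gamma}$ for some $\gamma>0$. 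A direct computation for $p(t,\xi):=|\alpha(t,\xi)|^2$, using $\imagpart\nu(\xi)=-c_0(\xi-\xi_0)^2$, then yields
\[
 \pa_t p(t,\xi)=-\frac{2c_0(\xi-\xi_0)^2}{t}\,p(t,\xi)^2+h(t,\xi),
 \qquad |h(t,\xi)|\le 2|\alpha(t,\xi)|\,|R(t,\xi)|\le C\eps^4 t^{-1-\gamma}.
\]

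Next I would derive a pointwise lower bound for $p$ on the shrinking interval $I_t:=\{\xi:|\xi-\xi_0|\le\rho(\log t)^{-1/2}\}$, where $\rho>0$ is a small fixed constant. Bounding the dissipative term from below by means of $p\le C^2\eps^2$ and using the integrability of $h$, integration of the above identity from a fixed time $t_0$ (say $t_0=2$) to $t$ gives
\[
 p(t,\xi)\ge p(t_0,\xi)-2c_0C^4\eps^4(\xi-\xi_0)^2\log(t/t_0)-C\eps^4\int_{t_0}^{\infty}s^{-1-\gamma}\,ds.
\]
For $\xi\in I_t$ one has $(\xi-\xi_0)^2\log t\le\rho^2$, so both correction terms are $O(\eps^4)$, uniformly in such $\xi$ and in $t$. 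On the other hand, $\alpha(0,\xi)=\eps\widehat{\psi}(\xi)$, and from $\pa_t\alpha=-i\op{F}[\op{U}(-t)N(u,\pa_x u)]$ the Duhamel correction over the finite interval $[0,t_0]$ is $O(\eps^3)$ in $L_\xi^\infty$; hence the hypothesis $\widehat{\psi}(\xi_0)\neq 0$ together with continuity of $\widehat{\psi}$ produces a fixed neighborhood $\{|\xi-\xi_0|\le\delta_0\}$ and a constant $c_1>0$ with $p(t_0,\xi)\ge c_1\eps^2$ there, once $\eps$ is small. Since $|I_t|=2\rho(\log t)^{-1/2}\to 0$, we have $I_t\subset\{|\xi-\xi_0|\le\delta_0\}$ for all large $t$, whence $p(t,\xi)\ge c_1\eps^2-C\eps^4\ge\tfrac{c_1}{2}\eps^2$ on $I_t$ for $\eps\le\eps_0$ small.

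Finally I would integrate this lower bound over the critical region:
\[
 \|u(t)\|_{L_x^2}^2=\int_\R p(t,\xi)\,d\xi\ge\int_{I_t}p(t,\xi)\,d\xi\ge\frac{c_1}{2}\eps^2\,|I_t|=\frac{c_1\rho\,\eps^2}{(\log t)^{1/2}},
\]
so that $(\log t)^{1/4}\|u(t)\|_{L_x^2}\ge(c_1\rho\,\eps^2/2)^{1/2}>0$ for all large $t$, which yields the claimed positive $\liminf$. (One can equally compare $p$ with the explicit Bernoulli solution $\eps^2|\widehat\psi(\xi)|^2/(1+2c_0(\xi-\xi_0)^2\eps^2|\widehat\psi(\xi)|^2\log t)$ and evaluate $\int_\R$ by the substitution $\eta=(\xi-\xi_0)\sqrt{\log t}$, which reveals the optimal constant, but the one-sided argument above suffices.)

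The main obstacle is the justification of the reduced equation with a \emph{genuinely time-integrable} remainder in $L_\xi^\infty$: the whole argument rests on the corrections to the Bernoulli-type modulus equation being $O(\eps^4)$, hence $o(\eps^2)$ relative to the surviving mass $p(t_0,\cdot)\sim\eps^2$ on $I_t$, and this would collapse if $R$ decayed only like $t^{-1}$ without an extra integrable factor. A secondary point requiring care is the uniformity in $\xi$ of all estimates near $\xi_0$ (the lower bound $p(t_0,\xi)\ge c_1\eps^2$ and the $L_\xi^\infty$ control of $\alpha$), since it is precisely the balance between the width $(\log t)^{-1/2}$ of $I_t$ and the $\eps^2$ height of the profile there that produces the sharp exponent $1/4$.
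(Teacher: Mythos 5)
Your argument is correct in substance, but it takes a genuinely different route from the paper for the core lower-bound step. The paper feeds the profile equation into Lemma~\ref{lem_asymp_prof} (the $P/\sqrt{Q}$ decomposition), which solves the ODE exactly and yields the explicit formula $|A(\tau,\xi)|^2=|A(0,\xi)|^2/(1+2c_0(\xi-\xi_0)^2|A(0,\xi)|^2\tau)$; the lower bound then follows from Lemma~\ref{lem_Sagawa} applied to the resulting integral. You instead avoid solving the ODE altogether: integrating $\pa_t p=-2c_0(\xi-\xi_0)^2t^{-1}p^2+h$ with the crude bound $p^2\le (C\eps)^4$ over the shrinking window $|\xi-\xi_0|\le\rho(\log t)^{-1/2}$, the cumulative dissipation is controlled by $(\xi-\xi_0)^2\log t\le\rho^2$ and is therefore $O(\eps^4)$, so it cannot destroy the initial mass $p(t_0,\cdot)\gtrsim\eps^2$ there; integrating the surviving height $\eps^2$ over the width $(\log t)^{-1/2}$ gives exactly the $(\log t)^{-1/4}$ rate. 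This is more elementary and bypasses the hardest lemma of the paper, at the cost of being strictly one-sided: it produces no matching upper bound and no asymptotic profile, both of which the paper's machinery also delivers (and needs for Theorem~\ref{thm_upper}).

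The one point you rightly flag as the main obstacle is resolved by the paper's Lemma~\ref{lem3.1}, but with a twist you must incorporate: it is not $\alpha$ itself but $\beta=\alpha-\sigma$ that satisfies the profile equation with a time-integrable remainder $|R|\le C\eps^3t^{-1-\kappa}\jb{\xi}^{-2}$, $0<\kappa<1/4$. So your differential inequality should be run on $|\beta|^2$ rather than $|\alpha|^2$, and at the end you recover $\|u(t)\|_{L^2}=\|\alpha(t)\|_{L^2}\ge\|\beta(t)\|_{L^2}-\|\sigma(t)\|_{L^2}$ with $\|\sigma(t)\|_{L^2}=O(\eps^3t^{-1/2})$, which is harmless against the $(\log t)^{-1/4}$ main term. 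Likewise your initial lower bound $p(t_0,\xi)\ge c_1\eps^2$ near $\xi_0$ is supplied by the paper's Lemma~\ref{asymp_alpha_1}, which gives $|\alpha(1,\xi)-\eps\hat{\psi}(\xi)|\le C\eps^2\jb{\xi}^{-2}$ --- an $O(\eps^2)$ rather than the $O(\eps^3)$ correction you claim, but still sufficient since $\eps|\hat{\psi}(\xi_0)|-C\eps^2\ge\eps|\hat{\psi}(\xi_0)|/2$ for small $\eps$. With these two substitutions your proof closes.
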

%--------------------------------------

Now, let us explain heuristically why $L^2$-decay rate should be 
$O((\log t)^{-1/4})$  if $\hat{\varphi}(\xi_0)\ne 0$. 
For this purpose, let us first remember the fact that the solution $u^0$ 
to the free Schr\"odinger equation (i.e., the case of $N=0$) 
behaves like 
\[
 \pa_x^k u^0(t,x)
 \sim 
 \left(\frac{ix}{t}\right)^k \frac{e^{-i\pi/4}}{\sqrt{t}} 
 \hat{\varphi}\left( \frac{x}{t}\right) e^{i\frac{x^2}{2t}}
 +\cdots
\]
as $t\to+\infty$ for $k=0,1,2,\ldots$. 
Viewing it as a rough approximation of the solution 
$u$ for \eqref{eq}, we may expect that $\pa_x^k u(t,x)$ could be 
better approximated by 
\[
 \left(\frac{ix}{t}\right)^k \frac{1}{\sqrt{t}} 
 A\left(\log t,  \frac{x}{t}\right) e^{i\frac{x^2}{2t}}
\]
with a suitable function $A(\tau,\xi)$, 
where $\tau=\log t$, $\xi=x/t$ and $t\gg 1$. 
Note that 
$A(0,\xi)=e^{-i\pi/4}\, \hat{\varphi}(\xi)$ 
and that the extra variable $\tau=\log t$ is responsible 
for possible long-range nonlinear effect. 
Substituting the above expression into \eqref{eq} 
and keeping only the leading terms, 
we can see (at least formally) that $A(\tau,\xi)$ should satisfy 
the ordinary differential equation 
\begin{align*}
 i\pa_{\tau} A = \nu(\xi)|A|^2A+\cdots 
\end{align*}
under \eqref{weak_gi}. If $N$ is weakly dissipative, we see that 
\[
  \pa_{\tau} |A|^2 = -2c_0(\xi-\xi_0)^2|A|^4+\cdots.
\]
Then it follows that 
\[
 |A(\tau,\xi)|^2
=
\frac{|\hat{\varphi}(\xi)|^2}{1+2c_0(\xi-\xi_0)^2 |\hat{\varphi}(\xi)|^2\tau} 
+\cdots,
\]
whence 
\begin{align*}
\|u(t)\|_{L_x^2}
\sim 
\|A(\log t)\|_{L^2_{\xi}}
\sim 
\left(\int_{\R} 
\frac{|\hat{\varphi}(\xi)|^2}{1+2c_0(\xi-\xi_0)^2|\hat{\varphi}(\xi)|^2
\log t}\, d\xi\right)^{1/2}\quad 
(t\to+\infty).
\end{align*}
By considering the behavior as $t\to +\infty$ of this integral carefully, 
we see that $L^2$-decay rate in the weakly dissipative case should be 
just $O((\log t)^{-1/4})$ if $\hat{\varphi}(\xi_0)\ne 0$.

Our strategy of the proof of Theorems~\ref{thm_upper} and \ref{thm_lower} 
is to justify the above heuristic argument. 
The key is to concentrate on 
$\alpha(t,\xi)=\op{F}[\op{U}(-t)u(t,\cdot)](\xi)$, 
which is expected to play the role of $A(\log t,\xi)$ in the above argument. 
In the previous work \cite{LNSS3}, we have derived 
the pointwise estimate \eqref{est_LNSS3} for $\alpha(t,\xi)$.
Instead of doing so, our proof below will be based on more direct asymptotic 
analysis in the spirit of Hayashi-Naumkin~\cite{HN1}, \cite{HNS}. 

The rest of this paper is organized as follows. 
In Section~\ref{sec_keylemmas}, we establish 
some technical lemmas to make some necessary preparations to obtain our
main theorems. In Section~\ref{sec_proof}, we prove Theorems~\ref{thm_upper} 
and \ref{thm_lower}. In Section~\ref{sec_strict}, we make several remarks on 
the strictly dissipative case. Finally, some backgrounds on $\nu(\xi)$ 
are presented in \ref{sec_app_a}.

%%%%%%%%%%%-----------------------------------------------------------------
\section{Key Lemmas}  \label{sec_keylemmas}
%%%%%%%%%%%-----------------------------------------------------------------
In this section we introduce three lemmas which play key roles in our 
analysis. 
The first one is related to the ordinary differential equation in the form 
\begin{align*}
 i\pa_t \beta(t,\xi) 
 = 
 \frac{\nu(\xi)}{t} |\beta(t,\xi) |^2 \beta(t,\xi)  
 + 
 \cdots,
% \label{ODE}
\end{align*}
which we call {\em the profile equation} associated with \eqref{eq} under 
\eqref{weak_gi}. 
The other two are related to the integral in the form 
\begin{align}
\int_{\R} 
\frac{|\hat{\varphi}(\xi)|^2}{1+2c_0(\xi-\xi_0)^2|\hat{\varphi}(\xi)|^2
\log t}\, d\xi,
\label{integral}
\end{align}
which appears naturally as explained in the introduction.

In what follows we denote several positive constants by the same letter $C$, 
which may vary from one line to another.

%%%%%%%%%%%-----------------------------------------------------------------
\subsection{A lemma related to the profile equation } \label{subsec_key1}
%%%%%%%%%%%-----------------------------------------------------------------

We start with the following lemma which characterizes the large-time 
asymptotics of solutions to the profile equation.
%-------------------
\begin{lem} \label{lem_asymp_prof}
Let $\theta_0(\xi)$, $\mu(\xi)$ be $\C$-valued continuous functions on $\R$ 
satisfying 
\begin{align}
|\theta_0(\xi)|\le C\eps\jb{\xi}^{-2}, 
\quad 
|\mu(\xi)|\le C\jb{\xi}^3, 
\quad 
\imagpart \mu(\xi) \le 0,
\label{assump_2_1_1}
\end{align}
where $\eps>0$ is a small parameter. 
Let $\rho(t,\xi)$ be a $\C$-valued continuous function on 
$[1,\infty)\times \R$ satisfying 
\begin{align}
 |\rho(t, \xi)| 
\le  \frac{C\eps^3}{\jb{\xi}^2t^{1+\kappa}}
\label{assump_2_1_2}
\end{align}
with some $\kappa>0$. 
If the function $\beta:[1,\infty)\times\R\to \C$ solves 
\begin{align}
 i\pa_t \beta(t,\xi) 
 = 
 \frac{\mu(\xi)}{t} |\beta(t,\xi) |^2 \beta(t,\xi)  
 + 
 \rho(t,\xi), 
 \quad
 \beta(1,\xi) = \theta_0(\xi)
 \label{profile_eq}
\end{align}
and $\eps>0$ is suitably small,
then we have 
\begin{align*}
 |\beta(t,\xi) - A(\log t,\xi)|
\le \frac{C\eps^3}{\jb{\xi}^2t^{\kappa-\delta}}
\end{align*}
for $(t,\xi)\in [1,\infty)\times \R$, 
where $\delta$ is an arbitrarily small positive real number, 
and the function $A:[0,\infty)\times\R\to \C$ solves
\[
i\pa_\tau A(\tau,\xi) 
 = 
 \mu(\xi) |A(\tau,\xi) |^2 A(\tau,\xi), 
 \quad
 A(0,\xi) = \theta_{0}(\xi)+\theta_1(\xi)
\]
with some $\theta_1:\R\to \C$ satisfying 
$|\theta_1(\xi)| \leq C \eps^3\jb{\xi}^{-2}$ .
\end{lem}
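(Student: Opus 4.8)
The plan is to pass to the logarithmic time $\tau=\log t$ and to construct the limiting profile $A$ (together with the correction $\theta_1$) by a contraction argument organized from the final time $\tau=+\infty$. First I set $B(\tau,\xi)=\beta(e^{\tau},\xi)$; since $\pa_\tau=t\pa_t$, equation \eqref{profile_eq} becomes
\begin{align*}
 i\pa_\tau B=\mu(\xi)|B|^2B+\tilde{\rho}(\tau,\xi),\qquad B(0,\xi)=\theta_0(\xi),
\end{align*}
where $\tilde{\rho}(\tau,\xi)=e^{\tau}\rho(e^{\tau},\xi)$ now decays exponentially, $|\tilde{\rho}(\tau,\xi)|\le C\eps^3\jb{\xi}^{-2}e^{-\kappa\tau}$, by \eqref{assump_2_1_2}. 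The target profile $A$ is required to solve the unperturbed equation $i\pa_\tau A=\mu|A|^2A$. The first ingredient is an a priori bound: computing $\pa_\tau|B|^2=2(\imagpart\mu)|B|^4+2\realpart(-i\cc{B}\tilde{\rho})$ and using the dissipative sign $\imagpart\mu\le0$ from \eqref{assump_2_1_1}, I obtain $\pa_\tau|B|\le|\tilde{\rho}|$, so integration in $\tau$ gives $|B(\tau,\xi)|\le C\eps\jb{\xi}^{-2}$ for all $\tau\ge0$. The same computation shows $|A(\tau,\xi)|\le|A(0,\xi)|$, so $A$ also stays of size $\eps\jb{\xi}^{-2}$.

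Next I would set $w=B-A$ and seek $A$ (equivalently $w$) making $w\to0$ as $\tau\to+\infty$. Subtracting the two equations gives $i\pa_\tau w=\mu(|B|^2B-|A|^2A)+\tilde{\rho}$ with $A=B-w$; integrating from $\tau$ to $+\infty$ and imposing $w(+\infty)=0$ recasts the problem as the fixed-point equation
\begin{align*}
 w(\tau,\xi)=i\int_{\tau}^{\infty}\Bigl[\mu(\xi)\bigl(|B|^2B-|A|^2A\bigr)+\tilde{\rho}(s,\xi)\Bigr]\,ds,\qquad A=B-w.
\end{align*}
I would solve this by contraction in the space $X$ of continuous $w$ with $\|w\|:=\sup_{\tau,\xi}\jb{\xi}^2e^{(\kappa-\delta)\tau}|w(\tau,\xi)|<\infty$, on a ball of radius $O(\eps^3)$. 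Then $\theta_1:=-w(0,\cdot)$ automatically satisfies $|\theta_1(\xi)|\le C\eps^3\jb{\xi}^{-2}$, and differentiating the fixed-point identity confirms that $A=B-w$ genuinely solves $i\pa_\tau A=\mu|A|^2A$ with $A(0,\xi)=\theta_0(\xi)+\theta_1(\xi)$.

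For the estimates I would use the purely algebraic cubic difference bound $\bigl||B|^2B-|A|^2A\bigr|\le C(|B|^2+|A|^2)|w|\le C\eps^2\jb{\xi}^{-4}|w|$, which is valid throughout the ball because there $|A|=|B-w|\le C\eps\jb{\xi}^{-2}$. Together with $|\mu|\le C\jb{\xi}^3$ the integrand is $O(\eps^2\jb{\xi}^{-1}|w|)$, and integrating the weight $e^{-(\kappa-\delta)s}$ from $\tau$ to $+\infty$ reproduces $e^{-(\kappa-\delta)\tau}$ with a factor $(\kappa-\delta)^{-1}$, while the $\tilde{\rho}$-contribution is even better, of order $\eps^3\jb{\xi}^{-2}e^{-\kappa\tau}$. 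This shows the map carries the $O(\eps^3)$-ball into itself and, by the same difference estimate, is a contraction with constant $O(\eps^2/(\kappa-\delta))$ once $\eps$ is small. Undoing $\tau=\log t$ then yields exactly $|\beta(t,\xi)-A(\log t,\xi)|=|w(\log t,\xi)|\le C\eps^3\jb{\xi}^{-2}t^{-(\kappa-\delta)}$, as claimed. I expect the main obstacle to be setting up the final-value fixed point so that the a priori bound (which rests essentially on $\imagpart\mu\le0$) and the smallness of $\eps$ close the contraction while simultaneously producing $\theta_1$ of the required $O(\eps^3)$ size; the loss $\delta>0$ merely provides a convenient margin in the exponent keeping the weighted integral under control, and could in fact be removed given the clean $e^{-\kappa\tau}$ decay of $\tilde{\rho}$.
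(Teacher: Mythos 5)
Your argument is correct, but it takes a genuinely different route from the paper. The paper writes $\beta=P/\sqrt{Q}$ with $P,Q$ solving the coupled system \eqref{eq_pq}, proves a uniform bound on $P$ by a bootstrap, shows $Pe^{i\Psi}$ converges with rate $t^{-\kappa+\delta}$ and that $Q$ is logarithmically affine up to the same error, and then assembles an \emph{explicit formula} for the limiting profile $A$ (in terms of $\theta_\infty$, $\Lambda$ and $Q_\infty$); the $\delta$-loss there comes from bounding $\sqrt{Q}\lesssim(\log t)^{1/2}\lesssim t^{\delta}$ in the source term. You instead pass to $\tau=\log t$, get the a priori bound on $|B|$ directly from the sign of $\imagpart\mu$ (which is cleaner than the paper's bootstrap, since no $Q$ intervenes), and construct $A$ abstractly as $B-w$ via a final-value contraction for $w$ in the exponentially weighted space; the essential point that makes this close is that the algebraic bound $\bigl||B|^2B-|A|^2A\bigr|\le C(|B|^2+|A|^2)|B-A|$ is insensitive to the divergent common phase, so the backward integral of the weight produces the small Lipschitz constant $O(\eps^2/(\kappa-\delta))$ while a forward Gronwall argument would lose a power of $t$. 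Your construction delivers everything the lemma asserts (including $\theta_1=-w(0,\cdot)=O(\eps^3\jb{\xi}^{-2})$, and $A$ solving the unperturbed equation by differentiating the fixed-point identity), and since only the modulus law $|A(\tau,\xi)|^2=|A(0,\xi)|^2/(1+2c_0(\xi-\xi_0)^2|A(0,\xi)|^2\tau)$ — a consequence of the ODE, not of the explicit formula — is used downstream in Section 3, nothing is lost. What the paper's method buys is the closed-form asymptotic representation of $\beta$ (explicit amplitude decay and logarithmic phase), which is conceptually informative; what yours buys is brevity, robustness, and, as you correctly observe, the removal of the $\delta$ in the remainder exponent. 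Two small points to make rigorous when writing it up: justify $\pa_\tau|B|\le|\tilde\rho|$ at zeros of $B$ (e.g.\ integrate $\pa_\tau|B|^2\le 2|B|\,|\tilde{\rho}|$ and use a standard comparison), and note that for $\delta$ close to $\kappa$ the conclusion follows a fortiori from the case of small $\delta$, so the $\eps$-threshold need not degenerate.
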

%-----------------
\begin{proof} 
Many parts of the argument below are similar to those given in \cite{Su} and 
\cite{HNS}, but we must be more careful in several parts. 
Our point of departure is the fact that $\beta(t,\xi)$ admits the 
decomposition 
\[
 \beta(t, \xi)= \frac{ P(t,\xi) }{ \sqrt{Q(t,\xi)} },
\]
where $P:[1,\infty)\times \mathbb{R} \rightarrow \mathbb{C}$ and 
$Q: [1,\infty)\times \mathbb{R} \rightarrow \mathbb{R}^{+}$ satisfy
\begin{align}
 \left\{
 \begin{array}{l}
  \dis{\pa_t P(t,\xi) = -i \frac{\realpart \mu(\xi) }{t} 
       \frac{ \bigl| P(t,\xi) \bigr|^2 }{ Q(t,\xi) } P(t,\xi) 
       -i\sqrt{Q(t, \xi) } \rho(t, \xi) ,
      }\\[4mm]
 \dis{\pa_t Q(t,\xi) =-2 \frac{ \imagpart \mu(\xi) }{t} 
       \bigl| P(t,\xi)  \bigr|^2,
       }\\[4mm]
  P(1, \xi) = \theta_0(\xi), 
  \quad 
  Q(1,\xi) = 1.
 \end{array}
 \right.
\label{eq_pq}
\end{align}
This is a result of uniqueness for the solution to \eqref{profile_eq}. 
We are going to specify 
the asymptotics of $P(t,\xi) $ and $Q(t,\xi) $ as $t \to +\infty$. 
We first show that there exists an $\eps_1>0$ 
such that 
\begin{align}
  \sup_{(t,\xi) \in [1,\infty) \times \R}\jb{\xi}^2| P(t,\xi)|  
  <  2C_0\eps
  \label{est_p}
\end{align}
if $\eps \in (0,\eps_1]$, where $C_0>0$ is a constant which satisfies
\[
\sup_{\xi \in \R}\jb{\xi}^2|\theta_0(\xi)| \le C_0 \eps.
\]
We shall argue by contradiction: 
If this is not true, there exists $T_{\eps} \in (1,\infty)$ 
such that 
\[
 \sup_{(t,\xi) \in [1, T_{\eps})\times \R} \jb{\xi}^2|P(t,\xi)|  
 \le  
 2C_0\eps
 \quad \mbox{and} \quad 
 \sup_{\xi \in \R}\jb{\xi}^2|P(T_{\eps},\xi)|  =  2C_0\eps.
\]
Then, by integrating $\pa_t Q$ in \eqref{eq_pq} 
with respect to the time variable from 1 to $t$, we have 
\begin{equation}
   1 \le Q(t,\xi) \le 
   1 - 2 \frac{\imagpart \mu(\xi)}{\jb{\xi}^4} (2C_0)^2\eps^2 \log t
   \label{2.6}
\end{equation}
for $t \in [1,T_{\eps}]$ and $\xi\in \R$. 
On the other hand, it follows from the first equation of \eqref{eq_pq} that 
\begin{align}
   \pa_t \Bigl(|P(t,\xi)|^2 \Bigr) \notag
 &= 
   2 \realpart 
     \Bigl( \cc{P(t,\xi)} \pa_t  P(t,\xi) \Bigr) \notag\\
 &= 
   2 \realpart
     \Bigl(-i\cc{P(t,\xi)} \sqrt{Q(t,\xi)} \rho(t,\xi) \Bigr)\notag\\
 &\le 
   2 \bigl| P(t,\xi) \bigr| \, \bigl| \sqrt{Q(t,\xi)} \rho(t,\xi) \bigr|. 
\label{2.7}
\end{align}
By \eqref{assump_2_1_1}, \eqref{assump_2_1_2}, \eqref{2.6} and 
\eqref{2.7}, we have 
\begin{align*}
  \jb{\xi}^2|P(T_{\eps},\xi)|
 &\leq 
  \jb{\xi}^2|\theta_0(\xi)| 
  + 
  \jb{\xi}^2\int_{1}^{T_{\eps}} 
\bigl| \sqrt{Q(\tau,\xi)} \rho(\tau,\xi) \bigr|\, d\tau
\\
 &\le 
  C_0\eps + 
  \int_{1}^{\infty} 
  (1+ C\varepsilon^2 \log \tau)^{1/2} 
    \frac{C\eps^3}{\tau^{1+\kappa}} d\tau\\
 &\le 
  C_0\eps + C_1\eps^3
\end{align*}
with some $C_1>0$ independnt of $\eps \in (0,1]$. 
If we choose $\eps_1=\min \{1, \sqrt{C_0/(2C_1)}\}$, we have 
\[
 \sup_{\xi \in \R} \jb{\xi}^2|P(T_{\eps},\xi)| 
\le C_0\eps + \frac{C_0}{2}\eps  
<  2C_0\eps 
\] 
for $\eps \in (0,\eps_1]$, which is the desired contradiction. 
Hence \eqref{est_p} must hold if $\eps$ is suitably small. 
Also we see from \eqref{2.6} and \eqref{assump_2_1_1} that 
\[
  1 \le  Q(t,\xi)  \le 1 + C  \frac{\eps^2}{\jb{\xi}}\log t
\]
for  $t \ge 1$, $\xi \in \R$. 
Next we define 
\[
 \Psi(t,\xi)
 = 
 \int_{1}^{t}
 \frac{ \realpart \mu(\xi) |P(\tau,\xi)|^2 }{Q(\tau,\xi)} 
 \frac{ d \tau }{\tau}
\]
so that 
$\pa_t \left(P(t,\xi) e^{i\Psi(t,\xi)} \right)
 = -i\sqrt{Q(t,\xi)} \rho(t,\xi) e^{i\Psi(t,\xi)}$. 
Note that 
\begin{align*}
  \left| \sqrt{Q(t,\xi)} \rho(t,\xi) \right|
 \leq
  (1+ C\varepsilon^2 \log t)^{1/2} \frac{C\eps^3}{\jb{\xi}^2t^{1+\kappa}} 
 \le
  \frac{C \eps^3}{\jb{\xi}^2t^{1+\kappa-\delta}},
\end{align*}
where $\delta>0$ can be arbitrarily small. So we obtain
\begin{align}
|P(t,\xi)- P_{\infty}(\xi)e^{-i \Psi(t,\xi)}| 
&=
\left|
i e^{-i\Psi(t,\xi)}\int_t^{\infty}
\sqrt{Q(\tau,\xi)}\rho(\tau,\xi)e^{i\Psi(\tau,\xi)}d\tau
\right|
 \notag\\
&\le 
\frac{C \eps^3}{\jb{\xi}^2t^{\kappa -\delta}}, 
\label{2.9}
\end{align}
where 
\[
 P_{\infty}(\xi) 
 = 
 \theta_0(\xi) 
 - 
 i\int_{1}^{\infty} 
 \sqrt{Q(\tau,\xi)} \rho(\tau,\xi) e^{i\Psi(\tau,\xi)}\, d\tau, 
\quad 
\xi \in \mathbb{R}.
\]
Note that
\begin{align}
|P_{\infty}{(\xi)}|
\leq 
|\theta_0({\xi})|+\int_1^{\infty}|\sqrt{Q(t,\xi)}\rho(t,\xi)|d\tau 
\leq 
C \eps {\jb{\xi}^{-2}}.
\label{2.10}
\end{align}
We also set 
$Q_{\infty}(t,\xi)= 1 - 2 {\imagpart \mu(\xi)} 
 \bigl(|P_{\infty}(\xi)|^2 \log t + \Lambda (\xi) \bigr)$ 
with
\[
  \Lambda(\xi) 
 =
  \int_{1}^{\infty} 
  \bigl( |P(\tau,\xi)|^2- |P_{\infty}(\xi)|^2 \bigr)
  \frac{d\tau}{\tau}.
\]
From \eqref{est_p}, \eqref{2.9} and \eqref{2.10} it follows that
\begin{align}
  \bigl| |P(t,\xi)|^2- |P_{\infty}(\xi)|^2 \bigr| 
 &\le 
  \bigl| P(t,\xi)- P_{\infty}(\xi) e^{-i \Psi(t,\xi)} \bigr| 
  \bigl( |P(t,\xi)|+|P_{\infty}(\xi)| \bigr)\notag\\
 &\le 
 \frac{C \eps^4}{ \jb{\xi}^4t^{\kappa -\delta}},\label{2.12}
\end{align}
which implies
\begin{align}
 \left| Q(t,\xi) - Q_{\infty}(t,\xi) \right|
&=
2\left|\imagpart\mu(\xi)
\int_t^{\infty}(|P(\tau,\xi)|^{2}-|P_{\infty}(\xi)|^{2})\frac{d\tau}{\tau}
\right|\notag\\
&\le C\jb{\xi}^3
   \int_{t}^{\infty} 
   {\bigl| |P(\tau,\xi)|^2 - |P_{\infty}(\xi)|^2 \bigr|}\frac{d\tau}{\tau}\notag\\
& \le  \frac{C \eps^4}{ \jb{\xi}t^{\kappa -\delta}}.
\label{2.130}
\end{align}
We remark that  \eqref{2.12} also gives us
\begin{align}
Q_{\infty}(t,\xi)
&\ge
1-2|\mu(\xi)| |\Lambda(\xi)|
 \notag\\
&\ge
1-C\jb{\xi}^3 \int_{1}^{\infty}\frac{C\eps^4}{\jb{\xi}^4\tau^{\kappa-\delta}}  
\frac{d\tau}{\tau}
\notag\\
&\ge
1-\frac{C\eps^4}{\jb{\xi}}
\notag\\
&\geq
\frac{1}{2}
\label{2.11a}
\end{align}
for $t\ge 1$ and $\xi \in \R$, provided that $\eps>0$ is suitably small. 
Let us also introduce 
\[
   \Phi(t,\xi)
  =
   \frac{\realpart \mu(\xi)}{t} 
   \left(\frac{|P(t,\xi)|^2}{Q(t,\xi)}
          - 
         \frac{|P_{\infty}(\xi)|^2}{Q_{\infty}(t,\xi)}
   \right).
\]
Then we have 
\begin{align*}
  \Psi(t,\xi)
  &= 
 |P_{\infty}(\xi)|^2 \realpart \mu(\xi) 
 \int_{1}^{t} \frac{1}{Q_{\infty}(\tau,\xi)}\frac{d\tau}{\tau}
 +
 \int_{1}^{t}\Phi(\tau,\xi)d \tau \\
  &=
   |P_{\infty}(\xi)|^2 \realpart \mu(\xi) 
     \int_{0}^{\log t} \frac{d \sigma}{Q_{\infty}(e^{\sigma},\xi)}
   +
    \int_{1}^{\infty} \Phi(\tau,\xi)\, d \tau
   -
    \int_{t}^{\infty} \Phi(\tau,\xi)\, d \tau
\end{align*}
and  
\begin{align*}
  \int_{t}^{\infty} \left| \Phi(\tau,\xi) \right|\, d\tau
   &=
   \int_{t}^{\infty}\left|\realpart \mu(\xi)
  \biggl(
         \frac{ |P(\tau,\xi)|^2-|P_{\infty}(\xi)|^2 }{Q(\tau,\xi)}
         -
         \frac
         { |P_{\infty}(\xi)|^2 \bigl(Q(\tau,\xi)-Q_{\infty}(\tau,\xi) \bigr) }
         { Q(\tau,\xi)Q_{\infty}(\tau,\xi) }
  \biggr)\right|
  \frac{1}{\tau}d\tau\\
 &\le
  C \jb{\xi}^3\int_{t}^{\infty}
  \biggl(
         \frac{ \bigl||P(\tau,\xi)|^2-|P_{\infty}(\xi)|^2 \bigr| }{Q(\tau,\xi)}
         +
         \frac
         { |P_{\infty}(\xi)|^2 \bigl|Q(\tau,\xi)-Q_{\infty}(\tau,\xi) \bigr| }
         { Q(\tau,\xi)Q_{\infty}(\tau,\xi) }
  \biggr)
  \frac{d\tau}{\tau}\\
 &\le 
  \frac{C \eps^4}{\jb{\xi}} 
    \int_{t}^{\infty} \frac{d\tau}{\tau^{1+\kappa -\delta}}\\
 &\le
  \frac{C \eps^4}{\jb{\xi} t^{\kappa -\delta}},
\end{align*}
whence 
\begin{align}
 &\left|  \exp(-i\Psi(t,\xi)) \exp
 \Bigl(i \int_{1}^{\infty} \Phi(\tau,\xi)\, d \tau\Bigr)
 - \exp
  \biggl(
    -i |P_{\infty}(\xi)|^2 {\realpart \mu(\xi)} 
    \int_{0}^{\log t} \frac{d \sigma}{Q_{\infty}(e^{\sigma},\xi)}  
  \biggr)
\right| \notag\\
&= 
\left|\exp\left( -i |P_{\infty}(\xi)|^2 {\realpart \mu(\xi)} \int_{0}^{\log t} \frac{d \sigma}{Q_{\infty}(e^{\sigma},\xi)} \right)\right|
\cdot
\left|\exp{\left(i\int_t^{\infty} \Phi(\tau,\xi)d\tau\right)}-1\right|
 \notag  \\
&\le
1\cdot \int_{t}^{\infty} \left| \Phi(\tau,\xi) \right|\, d\tau
 \notag  \\
 &\le
  \frac{C\eps^4}{\jb{\xi} t^{\kappa-\delta}}.\label{2.13}
\end{align}
Therefore, putting  
$\theta_{\infty}(\xi) =  P_{\infty}(\xi) \exp
 \left(-i \int_{1}^{\infty} \Phi(\tau,\xi)\, d \tau\right)$, 
we deduce from \eqref{2.9}, \eqref{2.10}, \eqref{2.13} that
\begin{align*}
  P(t,\xi)
 =& 
  P_{\infty}(\xi) e^{-i\Psi(t,\xi)} 
 + 
  O(\eps^3 t^{-\kappa+\delta}\jb{\xi}^{-2})\\
 %=&
 % \theta_{\infty}(\xi)  \exp(-i\Psi(t,\xi)) \exp
 %\Bigl(i \int_{1}^{\infty} \Phi(\tau,\xi)\, d \tau\Bigr)
 %+ 
 % O(\eps^3 t^{-\kappa+\delta}\jb{\xi}^{-2})\\
 =&
 \theta_{\infty}(\xi)
  \exp\biggl(
        -i |\theta_{\infty}(\xi)|^2 {\realpart \mu(\xi)} 
        \int_{0}^{\log t} \frac{d \sigma}{Q_{\infty}(e^{\sigma},\xi)}  
        \biggr) 
  +\theta_{\infty}(\xi) \cdot O(\eps^4t^{-\kappa+\delta}\jb{\xi}^{-1})\\
  & +O(\eps^3 t^{-\kappa+\delta}\jb{\xi}^{-2}) \\
 =&
  \theta_{\infty}(\xi) \exp
  \biggl(
         -i |\theta_{\infty}(\xi)|^2 {\realpart \mu(\xi)} 
         \int_{0}^{\log t} \frac{d \sigma}{Q_{\infty}(e^{\sigma},\xi)}  
         \biggr) 
   + O(\eps^3 t^{-\kappa+\delta}\jb{\xi}^{-2}) 
\end{align*}
and from \eqref{2.130}, \eqref{2.11a} that
\begin{align*} 
  \frac{1}{\sqrt{Q(t,\xi)}}
 &=
  \frac{1}{\sqrt{Q_{\infty}(t,\xi)}}
  +
  \frac
  {Q_{\infty}(t,\xi)-Q(t,\xi)}
  { 
    \sqrt{Q(t,\xi)Q_{\infty}(t,\xi)}
    ( \sqrt{Q(t,\xi)}+ \sqrt{Q_{\infty}(t,\xi)} )
  }\\
 &=
  \frac{1}{\sqrt{1 - 2\imagpart \mu(\xi)
            \bigl(|\theta_{\infty}(\xi)|^2 \log t + \Lambda(\xi)\bigr)}}
 + O(\eps^4\jb{\xi}^{-1}t^{-\kappa +\delta}).
\end{align*}
Piecing them together, we have 
\begin{align} 
  \beta(t,\xi)
 &=
  \frac{P(t,\xi)}{\sqrt{Q(t,\xi)}} \notag\\
 &=
  \frac{\theta_{\infty}(\xi) \exp
     \biggl(
      -i |\theta_{\infty}(\xi)|^2 \realpart \mu(\xi) 
        \int_{0}^{\log t} \frac {d \sigma}
                   {1 - 2 \imagpart \mu(\xi) 
                  \bigl(|\theta_{\infty}(\xi)|^2 \sigma + \Lambda(\xi) \bigr)
                   }  
     \biggr)}
     {\sqrt{1 - 2\imagpart \mu(\xi)
            \bigl(|\theta_{\infty}(\xi)|^2 \log t + \Lambda(\xi)\bigr)
           }}
 + O(\eps^3 t^{-\kappa+\delta}\jb{\xi}^{-2})
\label{goal}
\end{align}
Finally we set
\[
 A(\tau,\xi)=
\frac{\theta_{\infty}(\xi) \exp
     \biggl(
      -i |\theta_{\infty}(\xi)|^2 \realpart \mu(\xi) 
       \int_{0}^{\tau} \frac {d \sigma}
        {1 - 2 \imagpart \mu(\xi) 
         \bigl(|\theta_{\infty}(\xi)|^2 \sigma + \Lambda(\xi) \bigr)}  
     \biggr)}
     {\sqrt{1 - 2\imagpart \mu(\xi)
            \bigl(|\theta_{\infty}(\xi)|^2 \tau + \Lambda(\xi)\bigr)
           }}.
\]
Then \eqref{goal} can be rewritten as 
\begin{align*}
 |\beta(t,\xi) - A(\log t,\xi)|
\le \frac{C\eps^3}{\jb{\xi}^2t^{\kappa-\delta}}.
\end{align*}
Also straightforward calculations give us
\begin{align*}
 i\pa_\tau A(\tau,\xi)
 =&
 \mu(\xi)|A(\tau,\xi)|^2A(\tau,\xi) 
\end{align*}
and
\begin{align*}
 A(0,\xi)
 =&
 \frac{\theta_{\infty}(\xi)}
      {\sqrt{1-2\imagpart \mu(\xi)\Lambda(\xi)}}\\
 =&
 \left(
 \theta_0(\xi) -i\int_1^{\infty} 
  \sqrt{Q(\tau,\xi)}\rho(\tau,\xi) e^{i\Psi(\tau,\xi)\,}d\tau
 \right)
 \frac{1+(e^{-i\int_1^{\infty} \Phi(\tau,\xi)\, d\tau}-1)}
      {\sqrt{1-2\imagpart \mu(\xi)\Lambda(\xi)}}\\
 =&
 \bigl(\theta_0(\xi)+O(\eps^3\jb{\xi}^{-2})\bigr)
 \frac{1+O(\eps^4\jb{\xi}^{-1})}{\sqrt{1+O(\eps^4\jb{\xi}^{-1})}}\\
 =&
 \theta_0(\xi)+O(\eps^3\jb{\xi}^{-2}).
\end{align*}
\end{proof}

%%%%%%%%%%%-----------------------------------------------------------------
\subsection{Lemmas related to the integral \eqref{integral}}  
\label{subsec_key2}
%%%%%%%%%%%-----------------------------------------------------------------
In this subsection, we shall derive upper and lower bounds for 
the integral 
\begin{align}
S(\tau)
= \int_{\R} \frac{|\theta(\xi)|^2}{1+(\xi-\xi_0)^2|\theta(\xi)|^2\tau}\, d\xi,
\quad \tau\ge 1.
\label{dfn_S}
\end{align}
These results can be used to investigate estimates on 
$\|A(\log t)\|_{L^2_{\xi}}$ later.
%-------------------
\begin{lem}\label{lem_int1}
Let $\theta \in L^{\infty}(\R)$ and $\xi_0\in \R$. 
For the integral $S(\tau)$ given by \eqref{dfn_S}, 
we have 
\[
 S(\tau) \le 4\|\theta\|_{L^{\infty}}\tau^{-1/2},
\quad \tau \ge 1.
\]
\end{lem}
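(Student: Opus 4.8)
The plan is to eliminate the $\xi$-dependence buried inside the denominator by a pointwise monotonicity argument, thereby reducing $S(\tau)$ to a single explicit one-parameter integral, and then to estimate that integral by splitting the domain of integration at a suitable scale.

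First I would set $M:=\|\theta\|_{L^{\infty}}$, the case $M=0$ being trivial since then $S(\tau)\equiv 0$. For each fixed $\xi$, writing $c=(\xi-\xi_0)^2\tau\ge 0$, the map $a\mapsto a/(1+ca)$ is nondecreasing on $[0,\infty)$, because its derivative equals $(1+ca)^{-2}>0$. Applying this with $a=|\theta(\xi)|^2\le M^2$ (which holds for a.e.\ $\xi$) yields the pointwise bound
\[
\frac{|\theta(\xi)|^2}{1+(\xi-\xi_0)^2|\theta(\xi)|^2\tau}
\le
\frac{M^2}{1+(\xi-\xi_0)^2 M^2\tau},
\]
so that, after the translation $\eta=\xi-\xi_0$,
\[
S(\tau)\le \int_{\R}\frac{M^2}{1+M^2\tau\,\eta^2}\,d\eta.
\]

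Next I would estimate the right-hand side by splitting $\R$ at the threshold $|\eta|=(M\sqrt{\tau})^{-1}$, which is precisely the scale where the two terms of the denominator balance. On the inner region $|\eta|\le (M\sqrt{\tau})^{-1}$ I bound the integrand crudely by $M^2$; since this interval has length $2(M\sqrt{\tau})^{-1}$, its contribution is at most $2M\tau^{-1/2}$. On the outer region $|\eta|>(M\sqrt{\tau})^{-1}$ I discard the $1$ in the denominator, using $M^2/(1+M^2\tau\eta^2)\le (\tau\eta^2)^{-1}$, and compute $\int_{|\eta|>(M\sqrt{\tau})^{-1}}(\tau\eta^2)^{-1}\,d\eta=2M\tau^{-1/2}$. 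Adding the two contributions gives $S(\tau)\le 4M\tau^{-1/2}$, which is the assertion.

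There is no genuine obstacle here: the estimate is elementary and in fact holds for every $\tau>0$, not merely $\tau\ge 1$. The only point deserving a little care is the choice of the splitting threshold, which must be the balance point $(M\sqrt{\tau})^{-1}$ in order to reproduce the exact constant $4$ in the statement. I note in passing that a direct evaluation $\int_{\R}(1+M^2\tau\eta^2)^{-1}\,d\eta=\pi(M\sqrt{\tau})^{-1}$ would even give the sharper constant $\pi$, but the domain-splitting route keeps the argument self-contained and is what I would present.
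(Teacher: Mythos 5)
Your proof is correct and follows essentially the same route as the paper: split the integral at the balance scale $|\xi-\xi_0|\sim \|\theta\|_{L^\infty}^{-1}\tau^{-1/2}$, bound the integrand crudely on the inner region and by $((\xi-\xi_0)^2\tau)^{-1}$ on the outer region. Your preliminary monotonicity step (replacing $|\theta(\xi)|^2$ by $\|\theta\|_{L^\infty}^2$ pointwise) is a harmless extra reduction that the paper skips by bounding each piece directly.
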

%-----------------
\begin{proof} 
It suffices to consider the case of $\|\theta\|_{L^{\infty}}>0$.
We put $a=\|\theta\|_{L^{\infty}}^2$ and 
\[
 S_1(\tau)= 
\int_{|\xi-\xi_0|\le m \tau^{-1/2}} 
\frac{|\theta(\xi)|^2}{1+(\xi-\xi_0)^2|\theta(\xi)|^2\tau}\, d\xi,
\quad \tau \ge 1,
\]
with a constant $m>0$ which is to be fixed. It follows that 
\[
 S_1(\tau) 
 \le 
 \int_{\xi_0-m\tau^{-1/2}}^{\xi_0+m\tau^{-1/2}} 
\frac{a}{1+0}\, d\xi
 = 
 2m a\tau^{-1/2}.
\]
We also set $S_2(\tau)=S(\tau)-S_1(\tau)$. Since 
\[
\frac{|\theta(\xi)|^2}{1+(\xi-\xi_0)^2|\theta(\xi)|^2\tau}
 \le \frac{1}{(\xi-\xi_0)^2\tau},
\]
we have 
\[
 S_2(\tau) \le
 \frac{2}{\tau} \int_{\xi_0+m\tau^{-1/2}}^{\infty} 
 \frac{d\xi}{(\xi-\xi_0)^2}
 =
 \frac{2}{\tau} \cdot \frac{1}{m \tau^{-1/2}}
 =\frac{2}{m} \tau^{-1/2}.
\]
By choosing $m=a^{-1/2}$ (that is, $m a =1/m$), we arrive at 
\[
 S(\tau)
 \le 
 2\left(m a+ \frac{1}{m}\right) \tau^{-1/2}
 =
 4\|\theta\|_{L^{\infty}}\tau^{-1/2}.
\]
\end{proof}
%-------------------
\begin{lem}\label{lem_Sagawa} 
Let $\theta$, $\xi_0$ and $S(\tau)$ be as in Lemma~\ref{lem_int1}. 
Assume that there exists an open interval $I$ with $I\ni \xi_0$ such that 
$\inf_{\xi \in I}|\theta(\xi)|>0$.
Then we can choose a positive constant $C_2$, which is independent of 
$\tau \ge 1$ but may depend on $\theta$ and $\xi_0$, such that 
\[
 S(\tau) \ge C_2\tau^{-1/2}, \quad \tau \ge 1.
\]
\end{lem}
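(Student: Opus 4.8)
The plan is to throw away everything outside the interval $I$ and exploit the two-sided control on $|\theta|$ there. Since the integrand in \eqref{dfn_S} is nonnegative, restricting the domain of integration to $I$ can only decrease the value, so it suffices to bound $\int_I \frac{|\theta(\xi)|^2}{1+(\xi-\xi_0)^2|\theta(\xi)|^2\tau}\,d\xi$ from below. On $I$ I would replace the $\xi$-dependent weight by constants in the favorable direction: write $b=\inf_{\xi\in I}|\theta(\xi)|>0$ for the numerator (which is the hypothesis) and use the \emph{global} bound $M=\|\theta\|_{L^{\infty}}<\infty$ inside the denominator, since $(\xi-\xi_0)^2|\theta(\xi)|^2\tau \le M^2(\xi-\xi_0)^2\tau$. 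This yields
\[
S(\tau) \ge b^2\int_{I}\frac{d\xi}{1+M^2(\xi-\xi_0)^2\tau}.
\]

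Next I would shrink $I$ to a symmetric interval: pick $r>0$ with $(\xi_0-r,\xi_0+r)\subseteq I$, translate by $\eta=\xi-\xi_0$, and then rescale via $s=M\sqrt{\tau}\,\eta$ to reduce to the standard arctangent integral. This gives
\[
S(\tau)\ge b^2\int_{-r}^{r}\frac{d\eta}{1+M^2\tau\eta^2}
=\frac{b^2}{M\sqrt{\tau}}\cdot 2\arctan\!\bigl(Mr\sqrt{\tau}\bigr).
\]
Since $\arctan$ is increasing and $\tau\ge 1$, we have $\arctan(Mr\sqrt{\tau})\ge \arctan(Mr)>0$, a positive constant independent of $\tau$. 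Setting $C_2=2b^2\arctan(Mr)/M$ then produces $S(\tau)\ge C_2\tau^{-1/2}$ for all $\tau\ge 1$, with $C_2$ depending only on $\theta$ (through $b$ and $M$) and on $\xi_0$ (through the admissible $r$), exactly as the statement allows.

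The computation itself is elementary, so there is no serious obstacle; the only point requiring care is the pairing of the lower bound on the numerator with the \emph{global} upper bound in the denominator. This two-sided sandwiching is what leaves behind a constant-coefficient integral whose natural scale is $\eta\sim\tau^{-1/2}$, and it is precisely this scaling that makes the lower bound match the order $\tau^{-1/2}$ of the upper bound in Lemma~\ref{lem_int1}. One should also note that no continuity or integrability of $\theta$ beyond $\theta\in L^{\infty}$ and $\inf_I|\theta|>0$ is needed, since both bounds used above are available under these sole hypotheses.
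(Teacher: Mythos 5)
Your proof is correct and follows essentially the same route as the paper: bound the numerator below by $\inf_I|\theta|^2$, bound $|\theta|^2$ in the denominator above by $\|\theta\|_{L^\infty}^2$, restrict to a symmetric subinterval about $\xi_0$, and rescale by $\sqrt{\tau}$. The only cosmetic difference is that the paper first shrinks the integration domain to $|\xi-\xi_0|\le m\tau^{-1/2}$ so that the rescaled integral is over a fixed interval, whereas you integrate over the whole fixed subinterval and then bound the resulting arctangent from below; both yield the same $\tau^{-1/2}$ lower bound.
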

%-----------------
\begin{proof} 
As before we put $a=\|\theta\|_{L^{\infty}}^2$. We also set 
$b=\inf_{\xi \in I}|\theta(\xi)|^2$ so that 
\[
 \frac{|\theta(\xi)|^2}{1+(\xi-\xi_0)^2|\theta(\xi)|^2\tau}
 \ge 
 \frac{b}{1+(\xi-\xi_0)^2 a \tau},
 \quad \xi \in I.
\]
We take $m>0$ so small that 
$[\xi_0-m, \xi_0+m]\subset I$. 
Then, by the change of variable $\eta=(\xi-\xi_0)\tau^{1/2}$, 
we see that 
\begin{align*}
S(\tau) 
\ge 
\int_{\xi_0-m \tau^{-1/2}}^{\xi_0+m \tau^{-1/2}}
\frac{b}{1+(\xi-\xi_0)^2a\tau}\, d\xi
=
\int_{-m}^{m} \frac{b}{1+a\eta^2}\,\tau^{-1/2}d\eta,
\end{align*}
which yields the desired lower estimate with 
\[
C_2= \int_{-m}^{m} \frac{b}{1+a\eta^2}\,d\eta.
\]
\end{proof}

%%%%%%%%%%%-----------------------------------------------------------------
\section{Proof of the main results}  \label{sec_proof}
%%%%%%%%%%%-----------------------------------------------------------------
In this section we are going to prove Theorems \ref{thm_upper} and 
\ref{thm_lower}. First we make a reduction of the original equation \eqref{eq} 
to the profile equation. Then we will apply the lemmas prepared in 
the previous section to reach the main results. 

%%%%%%%%%%%-----------------------------------------------------------------
\subsection{Reduction to the profile equation}  
\label{subsec_reduction}
%%%%%%%%%%%-----------------------------------------------------------------
The argument in this subsection is exactly the same as that given in 
\cite{LNSS3}. 
We write $\op{L}=i\pa_t +\frac{1}{2}\pa_x^2$ and $\op{J}=x+it\pa_x$.  
Important relations are $[\pa_x, \op{J}]=1$, $[\op{L}, \op{J}]=0$, 
where $[\cdot, \cdot]$ denotes the commutator. 
Next we set $\alpha(t,\xi)=\op{F}[\op{U}(-t)u(t,\cdot)](\xi)$ for 
the solution $u(t,x)$ to \eqref{eq}, where 
$\op{U}(t)=\exp(i\frac{t}{2}\pa_x^2)$ and 
\[
 \op{F}\phi(\xi)=\hat{\phi}(\xi)
=\frac{1}{\sqrt{2\pi}} \int_{\R} \phi(y)e^{-iy\xi}\, dy.
\]
By virtue of the previous works \cite{SagSu} and \cite{LNSS3}, 
We have  already known the following.
%-------------------
\begin{lem}
\label{lem3.1}
Let $\eps=\|\varphi\|_{H^{3}\cap H^{2,1}}$ be suitably small. 
Assume that \eqref{weak_gi} and $({\bf A})$ are fulfilled. Then 
the solution $u$ to \eqref{eq}--\eqref{data} satisfies 
\begin{align}
 |\alpha(t,\xi)| \le \frac{C\eps}{\jb{\xi}^2}
\label{est_alpha}
\end{align}
for $t\ge 0$, $\xi \in \R$, and 
\begin{align}
\|u(t)\|_{H^3}+\|\op{J}u(t)\|_{H^2}\le C\eps (1+t)^{\gamma}
\label{est_sobolev}
\end{align}
for $t\ge 0$, where $0<\gamma<1/12$. 
Moreover, we have 
$\alpha(t,\xi)=\beta(t,\xi)+\sigma(t,\xi)$
with 
\[
  |\sigma(t,\xi)| \le \frac{C\eps^3}{t^{1/2}\jb{\xi}^4}
\]
and
\[
 i\pa_t \beta(t,\xi)  
 =
 \frac{\nu(\xi)}{t} |\beta(t,\xi)|^2 \beta(t,\xi)+R(t,\xi),
 \quad 
 |R(t,\xi)|
\le
  \frac{C \eps^3}{t^{1+\kappa}\jb{\xi}^2}
\]
for $t\ge 1$, where $0<\kappa <1/4$. 
\end{lem}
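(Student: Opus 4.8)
The plan is to reconstruct, as in \cite{SagSu} and \cite{LNSS3}, the factorization argument of Hayashi--Naumkin adapted to the derivative nonlinearity under the gauge condition \eqref{weak_gi}. First I would record the exact evolution law for $\alpha$. Since $\op{U}(-t)=\op{F}^{-1}e^{i\frac{t}{2}\xi^2}\op{F}$, one has $\alpha(t,\xi)=e^{i\frac{t}{2}\xi^2}\hat u(t,\xi)$, and taking the Fourier transform of \eqref{eq} yields
\begin{align*}
 i\pa_t\alpha(t,\xi)=e^{i\frac{t}{2}\xi^2}\op{F}[N(u,\pa_x u)](t,\xi).
\end{align*}
Everything then hinges on re-expressing the right-hand side through $\alpha$ and isolating its $t^{-1}$ leading part.

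To do this I would use the factorization $\op{U}(t)=M(t)D(t)\op{F}M(t)$, with $M(t)$ multiplication by $e^{i\frac{x^2}{2t}}$ and $D(t)\phi=(it)^{-1/2}\phi(\cdot/t)$, which gives the profile approximations $u(t,x)\approx (it)^{-1/2}e^{i\frac{x^2}{2t}}\alpha(t,x/t)$ and, after differentiating, $\pa_x u\approx i\xi\,u$ along $\xi=x/t$; the errors in these approximations carry additional time decay. Substituting into the cubic $N$ and running the resonance analysis, the gauge condition \eqref{weak_gi} eliminates the non-gauge-invariant monomials $u^3,\ \cc{u}^3,\ u\cc{u}^2$, and the remaining resonant contribution collapses precisely to $\frac{\nu(\xi)}{t}|\alpha|^2\alpha$ --- indeed, after the replacement $\pa_x u\mapsto i\xi u$ the contour integral defining $\nu(\xi)$ is exactly the operation that picks out this resonant coefficient. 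This produces the decomposition $\alpha=\beta+\sigma$, with the clean profile $\beta$ governed by the stated ODE up to a small remainder $R=O(t^{-1-\kappa}\jb{\xi}^{-2})$, and the correction $\sigma=O(t^{-1/2}\jb{\xi}^{-4})$ gathering the factorization error together with the non-resonant oscillatory terms, which are handled by integrating by parts in time.

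The estimates would be closed by a single continuity (bootstrap) argument tying the pointwise and the Sobolev bounds together. For the energy part I would commute $\op{J}$ through the equation using $[\pa_x,\op{J}]=1$ and $[\op{L},\op{J}]=0$ and run a weighted energy estimate for $\|u(t)\|_{H^3}+\|\op{J}u(t)\|_{H^2}$; condition $({\bf A})$, i.e. $\imagpart\nu\le 0$, provides the dissipative sign that limits the growth to $(1+t)^{\gamma}$ with $\gamma<1/12$. The uniform-in-time pointwise bound $|\alpha(t,\xi)|\le C\eps\jb{\xi}^{-2}$ would then follow by combining a Sobolev embedding in $\xi$ --- which controls $\jb{\xi}^2\alpha$ in $L^{\infty}_{\xi}$ by $\|u\|_{H^2}$ and $\|\op{J}u\|_{H^2}$ (so a priori with slow time growth) and explains the two powers of $\jb{\xi}$ coming from the $H^{2,1}$ data --- with the dissipative sign in the profile equation, which removes that growth and yields a constant in time.

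The hard part will be the bookkeeping of the $\xi$-weights in the remainders: one must produce $\jb{\xi}^{-4}$ for $\sigma$ and $\jb{\xi}^{-2}$ for $R$ uniformly in $t$, which is exactly where the derivative character of $N$ (through $\pa_x u\sim i\xi u$) makes the analysis heavier than in the power-type case and forces careful use of $\|u\|_{H^3}$ and $\|\op{J}u\|_{H^2}$; one must also verify that the slow-growth loss $(1+t)^{\gamma}$ is beaten by the gain from oscillation, leaving a positive $\kappa<1/4$. Since this reduction is carried out verbatim in \cite{LNSS3} with the structural input of \cite{SagSu}, I would reproduce that computation and cite it for the precise constants.
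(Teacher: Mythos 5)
The paper itself gives no proof of this lemma, deferring entirely to Section~2 of \cite{LNSS3} and Section~4 of \cite{SagSu}; your outline reconstructs exactly the factorization/bootstrap argument carried out there (profile extraction via $\op{U}(t)=M D\op{F}M$, resonance analysis under \eqref{weak_gi} producing $\nu(\xi)$, integration by parts in $t$ for the non-resonant terms, energy estimates with $\op{J}$), so it is essentially the same approach. The only small imprecision is the attribution of the $(1+t)^{\gamma}$ bound to the dissipative sign in $({\bf A})$: that growth rate comes from Gronwall's inequality using the $O(\eps t^{-1/2})$ pointwise decay of $u$ (so $\gamma=O(\eps^{2})<1/12$), while $({\bf A})$ is needed only to keep $|\alpha(t,\xi)|$ uniformly bounded in time.
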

%-----------------
The proof can be found in Section~2 of \cite{LNSS3} or Section~4 of 
\cite{SagSu}, so we skip it here.

%%%%%%%%%%%-----------------------------------------------------------------
\subsection{Proof of Theorem \ref{thm_upper}}  \label{subsec_proof_of_thm_1}
%%%%%%%%%%%-----------------------------------------------------------------
We are ready to prove Theorem \ref{thm_upper}. 
First we consider the easier case $0\le t \le e$. 
It follows from \eqref{est_sobolev} that 
\[
\|u(t)\|_{L_x^2} 
\le 
C\eps(1+t)^{\gamma} 
\left(\frac{1+\eps^2\log(t+1)}{1+\eps^2\log(t+1)}\right)^{1/4}
\le 
\frac{C\eps}{(1+\eps^2\log(t+1))^{1/4}}.
\]
Let us turn to the main case $t\ge e$ (i.e., $\log t\ge 1$). 
By \eqref{est_alpha} and the $L^2$-unitarity of $\op{F}$, $\op{U}(t)$, 
we have
\[
 \|u(t)\|_{L^2_x}=\|\alpha(t)\|_{L^2_{\xi}} \le C\eps.
\]
Also we see from Lemma~\ref{lem3.1} that there exists 
a function $\beta(t,\xi)$ satisfying
\begin{align}
 \|\alpha(t)-\beta(t)\|_{L^2_{\xi}} 
\leq
\left(\int_{\R} \frac{C\eps^6}{t\jb{\xi}^8}\, d\xi\right)^{1/2}
= \frac{C\eps^3}{t^{1/2}}
\label{alpha-beta}
\end{align}
and 
\[
 i\pa_t \beta(t,\xi)  
 =
 \frac{\nu(\xi)}{t} |\beta(t,\xi)|^2 \beta(t,\xi)+R(t,\xi)
\]
with 
\[
 |R(t,\xi)|
\le
  \frac{C \eps^3}{t^{1+\kappa}\jb{\xi}^2}, 
\quad 
|\beta(1, \xi)|\leq C\eps \jb{\xi}^{-2}.
\]

Now we apply Lemma~\ref{lem_asymp_prof}. Then we find 
$A:[0,\infty)\times\R\to \C$ satisfying
\begin{align}
 \|\beta(t)-A(\log t)\|_{L_{\xi}^2} \le \frac{C\eps^3}{t^{\kappa-\delta}}
 \label{beta-A}
\end{align}
and
\begin{align*}
i\pa_\tau A(\tau,\xi) 
 = 
 \nu(\xi) |A(\tau,\xi) |^2 A(\tau,\xi), 
 \quad
 A(0,\xi) = \beta(1,\xi)+\theta_1(\xi)
\end{align*}
with some $\theta_1:\R\to \C$ such that 
$|\theta_1(\xi)| \leq C \eps^3\jb{\xi}^{-2}$. 
Since  $N$ is weakly dissipative, we have 
\begin{align*}
 \pa_\tau |A(\tau,\xi)|^2 
 &=
 2 \imagpart\!\left(\cc{A(\tau,\xi)}i\pa_{\tau}A(\tau,\xi)\right)\\
 &=
 2 \imagpart \nu(\xi)\,  |A(\tau,\xi)|^4\\
 &= 
 -2c_0(\xi-\xi_0)^2 (|A(\tau,\xi)|^2)^2, 
\end{align*}
which leads to
\[
 |A(\tau,\xi)|^2
=
\frac{|A(0,\xi)|^2}{ 1+2c_0(\xi-\xi_0)^2 |A(0,\xi)|^2 \tau}.
\]
By Lemma~\ref{lem_int1} with $\tau=\log t$ and 
$\theta(\xi)=\sqrt{2c_0}A(0,\xi)$, 
%and Lemma~\ref{lem_asymp_prof}, 
we obtain 
\begin{align}
 \|A(\log t)\|_{L_{\xi}^2}^2 
&=
\frac{1}{2c_0}
\int_{\R} \frac{2c_0|A(0,\xi)|^2}{ 1+2c_0(\xi-\xi_0)^2 |A(0,\xi)|^2 \log t}
 \, d\xi \notag
\\
&\le 
\frac{4(\|\beta(1)\|_{L_{\xi}^{\infty}}+\|\theta_1\|_{L_{\xi}^{\infty}})}
{\sqrt{2c_0} (\log t)^{1/2}} \notag
\\
&\le 
\frac{C\eps}{(\log t)^{1/2}}
\label{Estimate A}
\end{align}
for $t\ge e$. By \eqref{Estimate A}, \eqref{beta-A} and \eqref{alpha-beta}, 
we arrive at 
\begin{align*}
\|u(t)\|_{L^2_x}
&=
\|\alpha(t)\|_{L^2_{\xi}}\\
&\le
\|A(\log t)\|_{L^2_{\xi}}
+\|A(\log t)-\beta(t)\|_{L^2_{\xi}}
+\|\beta(t)-\alpha(t)\|_{L^2_{\xi}}\\
&\le
\frac{C\eps^{1/2}}{(\log t)^{1/4}}
+\frac{C\eps^3}{t^{\kappa-\delta}} +\frac{C\eps^3}{t^{1/2}}\\
&\le
\frac{C\eps}{(\eps^2 \log t)^{1/4}},
\end{align*}
whence
\[
(1+\eps^2\log(t+1))^{1/4}\|u(t)\|_{L^2_x}
\le
\|u(t)\|_{L^2_x}+ C(\eps^{2}\log t)^{1/4}\|u(t)\|_{L^2_x}
\le
C\eps
\]
for $t\ge e$. This completes the proof of Theorem~\ref{thm_upper}.
\qed
%%%%%%%%%%%-----------------------------------------------------------------
\subsection{Proof of Theorem \ref{thm_lower}}  \label{subsec_proof_of_thm_2}
%%%%%%%%%%%-----------------------------------------------------------------
In order to prove Theorem \ref{thm_lower}, we need one more lemma. 
%-------------------
\begin{lem} \label{asymp_alpha_1}
We put $\alpha(t,\xi)=\op{F}\bigl[\op{U}(-t) u (t,\cdot)\bigr](\xi)$ 
for solution $u$ to \eqref{eq}--\eqref{data_eps} with small $\eps$. 
Then we have 
\[
 |\alpha(1,\xi)-\eps \hat{\psi}(\xi)| \le \frac{C\eps^{2}}{\jb{\xi}^2}, 
\quad \xi \in \R.
\]
\end{lem}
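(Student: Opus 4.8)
The plan is to exploit the fact that $\alpha(0,\xi)$ equals $\eps\hat\psi(\xi)$ exactly and to control the short-time deviation through the evolution equation satisfied by $\alpha$. Since $\op{U}(0)$ is the identity, \eqref{data_eps} gives $\alpha(0,\xi)=\op{F}[u(0,\cdot)](\xi)=\eps\hat\psi(\xi)$. Differentiating $\alpha(t,\xi)=\op{F}[\op{U}(-t)u(t,\cdot)](\xi)$ in $t$, using \eqref{eq}, and noting that $\op{U}(-t)$ commutes with $\pa_x^2$ (both are Fourier multipliers), the contributions of the free part cancel and one is left with
\begin{align*}
\pa_t \alpha(t,\xi) = -i\,\op{F}\bigl[\op{U}(-t)N(u,\pa_x u)(t,\cdot)\bigr](\xi).
\end{align*}
Integrating from $0$ to $1$ and recalling that $\op{U}(-t)$ acts on the Fourier side as multiplication by $e^{it\xi^2/2}$, I would obtain
\begin{align*}
\alpha(1,\xi)-\eps\hat\psi(\xi) = -i\int_0^1 e^{it\xi^2/2}\,\op{F}\bigl[N(u,\pa_x u)(t,\cdot)\bigr](\xi)\,dt,
\end{align*}
so the whole matter reduces to a pointwise bound on $\op{F}[N(u,\pa_x u)(t,\cdot)](\xi)$ that is uniform for $t\in[0,1]$.

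The key point is to produce the weight $\jb{\xi}^{-2}$. For this I would use the elementary identity $\jb{\xi}^2\op{F}[f](\xi)=\op{F}[(1-\pa_x^2)f](\xi)$ together with $\|\op{F}g\|_{L^\infty}\le C\|g\|_{L^1}$, which yields
\begin{align*}
\jb{\xi}^2\bigl|\op{F}[N(u,\pa_x u)(t,\cdot)](\xi)\bigr| \le C\bigl\|(1-\pa_x^2)N(u,\pa_x u)(t,\cdot)\bigr\|_{L^1_x}.
\end{align*}
Since $N$ is a cubic homogeneous polynomial in $(u,\cc u,\pa_x u,\cc{\pa_x u})$, after applying $(1-\pa_x^2)$ and the Leibniz rule every resulting term is a product of three factors, each a derivative of $u$ (or its conjugate) of order at most three. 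I would estimate these products by Hölder's inequality, placing the most differentiated factor in $L^2$, one of the remaining factors in $L^\infty$, and the last in $L^2$, and using the one-dimensional embedding $H^1\hookrightarrow L^\infty$; this should give $\|(1-\pa_x^2)N(u,\pa_x u)(t,\cdot)\|_{L^1_x}\le C\|u(t)\|_{H^3}^3$.

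Combining the two displays, $\jb{\xi}^2|\alpha(1,\xi)-\eps\hat\psi(\xi)| \le C\int_0^1 \|u(t)\|_{H^3}^3\,dt$, and invoking the a priori bound \eqref{est_sobolev} of Lemma~\ref{lem3.1} (applied to the data $\eps\psi$, with $C$ depending on $\psi$), which gives $\|u(t)\|_{H^3}\le C\eps$ for $t\in[0,1]$, I arrive at $\jb{\xi}^2|\alpha(1,\xi)-\eps\hat\psi(\xi)|\le C\eps^3\le C\eps^2$ for $\eps\le 1$; this is in fact marginally stronger than the stated estimate. The step I expect to be the main obstacle is the weighted $L^1$ estimate on $(1-\pa_x^2)N$: one must check that even the worst term, in which both derivatives fall on a single factor $\pa_x u$ to produce $\pa_x^3 u$, stays in $L^1$ by virtue of the $H^3$ regularity, and more generally that every term arising from the Leibniz expansion admits an $L^2$--$L^\infty$--$L^2$ Hölder splitting bounded by $\|u(t)\|_{H^3}^3$.
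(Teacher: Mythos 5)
Your argument is correct, and it reaches the conclusion by a route that differs from the paper's in the key step. Both proofs ultimately estimate $\phi:=\op{U}(-1)u(1,\cdot)-u(0,\cdot)$ through the Duhamel integral of $N(u,\pa_x u)$ over $[0,1]$, but they extract the weighted pointwise bound $\jb{\xi}^2|\hat\phi(\xi)|\le C\eps^2$ differently. The paper writes $\jb{\xi}^2\hat\phi=\op{F}[(1-\pa_x^2)\phi]$ and applies the interpolation $\|\hat g\|_{L^\infty}^2\le 2\|g\|_{L^2}\|g\|_{H^{0,1}}$, which forces it to control $\|\phi\|_{H^{2,1}}$ as well as $\|\phi\|_{H^2}$; this is where the operator $\op{J}=\op{U}(t)x\op{U}(t)^{-1}$ and the bound $\|\op{J}u(1)\|_{H^2}+\|u(0)\|_{H^{2,1}}\le C\eps$ enter, and since that factor contributes only one power of $\eps$, the paper obtains $Y^2\le C\eps^3\cdot\eps$, i.e.\ $Y\le C\eps^2$. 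You instead bound $\jb{\xi}^2|\op{F}[N]|$ by $\|(1-\pa_x^2)N\|_{L^1}$ via the elementary $L^1\to L^\infty$ Fourier estimate, and control that $L^1$ norm by $\|u(t)\|_{H^3}^3$ through the $L^2$--$L^\infty$--$L^2$ H\"older splitting (which does go through for every term of the Leibniz expansion, the worst case $\pa_x^3u$ landing in $L^2$ and the remaining order-$\le 2$ factors in $L^\infty$ and $L^2$). Your route is more elementary in that it needs no weighted Sobolev norms and no $\op{J}$, only the $H^3$ part of \eqref{est_sobolev} on $[0,1]$, and it even yields the slightly stronger bound $C\eps^3\jb{\xi}^{-2}$; the paper's interpolation trick, while giving only $\eps^2$ here, is the more robust tool when one cannot afford to put the nonlinearity in $L^1$.
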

%-----------------
\begin{proof} We follow the method used in \cite{SagSuC}. 
We set 
\[
 Y=\sup_{\xi \in \R} \jb{\xi}^2|\alpha(1,\xi)-\eps \hat{\psi}(\xi)|.
\]
By the inequality 
$\|\hat{\phi}\|_{L^{\infty}}^2
\le 2\|\hat{\phi}\|_{L^2}\|\pa_{\xi}\hat{\phi}\|_{L^2}
\le 2\|\phi\|_{L^2}\|\phi\|_{H^{0,1}}$ 
and the relation $\op{J}=\op{U}(t)x\op{U}(t)^{-1}$, 
we have
\begin{align*}
Y^2
&\le 
C \|\op{U}(1)^{-1}u(1,\cdot)-u(0,\cdot)\|_{H^2_x}\cdot
\|\op{U}(1)^{-1}u(1,\cdot) - u(0,\cdot) \|_{H^{2,1}_x}\\
&\le 
C \int_0^{1} 
 \bigl\|\op{U}(t)^{-1}N(u(t),u_x(t)) \bigr\|_{H^2_x} dt
 \cdot 
\bigl(\|\op{J}u(1)\|_{H^2_x}+\|u(0)\|_{H^{2,1}_x}\bigr)\\
&\le
 C\bigl(\sup_{t\in [0,1]}\|u(t)\|_{H^{3}_x}\bigr)^3
 \cdot C\eps\\
&\le
 C\eps^4,
\end{align*}
whence 
\[
 |\alpha(1,\xi)-\eps \hat{\psi}(\xi)| 
\le 
\frac{Y}{\jb{\xi}^2}
\le
\frac{C\eps^{2}}{\jb{\xi}^2}.
\]
\end{proof}

Now we are in a position to finish the proof of Theorem~\ref{thm_lower}. 
It follows from  Lemma~\ref{asymp_alpha_1} that 
\begin{align*}
 |\alpha(1,\xi_0)|
 \ge 
 \eps|\hat{\psi}(\xi_0)| -C\eps^2
\ge 
 \frac{\eps|\hat{\psi}(\xi_0)|}{2}
\end{align*}
if $\eps>0$ is suitably small. 
By the continuity of $\xi\mapsto \alpha(1,\xi)$, we can choose an 
open interval $I$ with $I\ni \xi_0$ such that 
\[
 \inf_{\xi \in I}|\alpha(1,\xi)| \ge \frac{\eps|\hat{\psi}(\xi_0)|}{3}>0.
\]
Now let $A(\tau,\xi)$ be as in the previous subsection. Then
by Lemmas \ref{lem_asymp_prof} and \ref{lem3.1} we have 
\begin{align*}
 \inf_{\xi \in I}|A(0,\xi)|
 &\ge 
 \inf_{\xi \in I}|\beta(1,\xi)| -C\eps^3\\
 &\ge 
 \inf_{\xi \in I}|\alpha(1,\xi)| -\sup_{\xi \in \R}|\sigma(1,\xi)|-C\eps^3\\
 &\ge
 \frac{\eps|\hat{\psi}(\xi_0)|}{3} -C\eps^3\\
 &\ge 
 \frac{\eps|\hat{\psi}(\xi_0)|}{4}>0
\end{align*}
and 
\[
|A(0,\xi)|\leq|\beta(1,\xi)|+|\theta_1(\xi)|\leq C\eps,
\]
if $\eps>0$ is suitably small. 
So Lemma~\ref{lem_Sagawa} gives us 
\begin{align*}
 \|A(\log t)\|_{L^2_{\xi}}^2 
=
\frac{1}{2c_0}
\int_{\R} \frac{2c_0|A(0,\xi)|^2}{ 1+2c_0(\xi-\xi_0)^2 |A(0,\xi)|^2 \log t}
 \, d\xi
\ge 
\frac{C_2}{(\log t)^{1/2}} 
\end{align*}
with some $C_2>0$. Therefore,
by Lemmas \ref{lem_asymp_prof} and \ref{lem3.1} we obtain 
\begin{align*}
\|u(t)\|_{L^2_x}
&=
\|\alpha(t)\|_{L^2_{\xi}}\\
&\ge
\|A(\log t)\|_{L^2_{\xi}}
-\|A(\log t)-\beta(t)\|_{L^2_{\xi}}
-\|\sigma(t)\|_{L^2_{\xi}}\\
&\ge
\frac{\sqrt{C_2}}{(\log t)^{1/4}}-\frac{C\eps^2}{t^{\kappa-\delta}}\\
&\ge
\frac{\sqrt{C_2}}{2(\log t)^{1/4}}
\end{align*}
for sufficiently large $t$, whence
\begin{align*}
\liminf_{t\to \infty}(\log t)^{1/4}\|u(t)\|_{L^2_x}
\ge \frac{\sqrt{C_2}}{2}>0,
\end{align*}
as desired. 
\qed

%%%%%%%%%%%-----------------------------------------------------------------
\section{Remarks on the strictly dissipative case}  
\label{sec_strict}
%%%%%%%%%%%-----------------------------------------------------------------
The lower bound part of our approach presented in the previous section 
is available also for the {\em strictly dissipative} case, that is 
the case where \eqref{weak_gi} and (${\bf A}_+$) are satisfied. 
In fact, we can show the following.
%--------------------------------------
\begin{thm} \label{thm_lower_str}
Assume that \eqref{weak_gi} and (${\bf A}_+$) are satisfied. 
If $\psi$ does not identically vanish, we can choose $\eps_0>0$ 
such that the global solution $u$ to \eqref{eq}--\eqref{data_eps} 
satisfies
\[
\liminf_{t\to +\infty} (\log t)^{1/2}\|u(t)\|_{L^2}>0
\]
for $\eps \in (0,\eps_0]$.
\end{thm}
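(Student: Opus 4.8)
The plan is to reproduce the proof of Theorem~\ref{thm_lower} almost verbatim, the only genuine change being that the degenerate integral lemma (Lemma~\ref{lem_Sagawa}) is replaced by an elementary estimate exploiting the \emph{uniform} negativity of $\imagpart\nu$. First I would note that (${\bf A}_+$) implies (${\bf A}$), since $\sup_\xi\imagpart\nu(\xi)<0$ forces $\imagpart\nu(\xi)\le 0$; hence under \eqref{weak_gi} the reduction in Lemma~\ref{lem3.1} is available as it stands, giving $\alpha(t,\xi)=\beta(t,\xi)+\sigma(t,\xi)$ together with the profile equation for $\beta$. As $\imagpart\nu(\xi)\le 0$ and $|\nu(\xi)|\le C\jb{\xi}^3$, Lemma~\ref{lem_asymp_prof} applies with $\mu=\nu$, producing $A(\tau,\xi)$ solving $i\pa_\tau A=\nu|A|^2A$ with $A(0,\cdot)=\beta(1,\cdot)+\theta_1$ and $\|\beta(t)-A(\log t)\|_{L^2_\xi}\le C\eps^3 t^{-\kappa+\delta}$.

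Next I would locate a frequency interval on which the profile is bounded below. Because $\psi\not\equiv 0$, its Fourier transform does not vanish identically, so I may fix $\xi_*\in\R$ with $\hat\psi(\xi_*)\ne 0$. Exactly as in the proof of Theorem~\ref{thm_lower}, Lemma~\ref{asymp_alpha_1} together with the continuity of $\xi\mapsto\alpha(1,\xi)$ furnishes an open interval $I\ni\xi_*$ with $\inf_{\xi\in I}|\alpha(1,\xi)|\ge \eps|\hat\psi(\xi_*)|/3>0$ for small $\eps$; subtracting the $O(\eps^3)$ contributions of $\sigma(1,\cdot)$ and $\theta_1$ then yields
\[
0<\frac{\eps|\hat\psi(\xi_*)|}{4}\le \inf_{\xi\in I}|A(0,\xi)|, \qquad \sup_{\xi\in I}|A(0,\xi)|\le C\eps .
\]
The decisive structural difference from the weakly dissipative case is that $I$ may now be chosen \emph{anywhere} $\hat\psi$ is nonzero: it need not surround a distinguished point $\xi_0$ at which the dissipation degenerates, which is precisely why the hypothesis here is merely $\psi\not\equiv 0$.

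I would then run the Riccati computation. Since $\pa_\tau|A|^2=2\imagpart\nu(\xi)|A|^4$, one integrates to obtain
\[
|A(\tau,\xi)|^2=\frac{|A(0,\xi)|^2}{1-2\imagpart\nu(\xi)|A(0,\xi)|^2\tau}.
\]
On the compact interval $I$ both $-\imagpart\nu(\xi)$ and $|A(0,\xi)|^2$ are bounded above, say by $D$ and $C\eps^2$; hence for $\tau=\log t\ge 1$ the denominator is at most $(1+2DC\eps^2)\tau$, and discarding the contribution outside $I$ gives
\[
\|A(\log t)\|_{L^2_\xi}^2\ge \int_I \frac{(\eps|\hat\psi(\xi_*)|/4)^2}{(1+2DC\eps^2)\log t}\,d\xi=\frac{C_3}{\log t}
\]
with $C_3>0$ depending on the now-fixed $\eps$. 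This is the analogue of Lemma~\ref{lem_Sagawa}, but it produces the faster rate $(\log t)^{-1/2}$ precisely because the uniform bound coming from (${\bf A}_+$) eliminates the $\tau^{-1/2}$-wide concentration near $\xi_0$ that is responsible for the $(\log t)^{-1/4}$ scaling in Theorem~\ref{thm_lower}.

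Finally I would conclude by the triangle inequality exactly as before, writing
\[
\|u(t)\|_{L^2_x}=\|\alpha(t)\|_{L^2_\xi}\ge \|A(\log t)\|_{L^2_\xi}-\|\beta(t)-A(\log t)\|_{L^2_\xi}-\|\sigma(t)\|_{L^2_\xi}\ge \frac{\sqrt{C_3}}{(\log t)^{1/2}}-\frac{C\eps^2}{t^{\kappa-\delta}},
\]
and since the polynomially small error is negligible against $(\log t)^{-1/2}$ for large $t$, this gives $\liminf_{t\to+\infty}(\log t)^{1/2}\|u(t)\|_{L^2}\ge \sqrt{C_3}/2>0$. I do not anticipate a serious obstacle: the whole apparatus of Sections~\ref{sec_keylemmas}--\ref{sec_proof} transfers, and the only genuinely new point is the elementary integral lower bound above, whose proof needs only the upper boundedness of $-\imagpart\nu$ on $I$ (the lower bound $\delta_0>0$ in (${\bf A}_+$) is used only to guarantee the global solution and the reduction, not the rate itself).
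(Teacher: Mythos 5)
Your proposal is correct and follows essentially the same route as the paper's proof of Theorem~\ref{thm_lower_str}: reduce to the profile equation via Lemma~\ref{lem3.1}, apply Lemma~\ref{lem_asymp_prof} with $\mu=\nu$, use Lemma~\ref{asymp_alpha_1} and continuity to get a lower bound for $|A(0,\xi)|$ on an interval $I$ around any $\xi_*$ with $\hat{\psi}(\xi_*)\ne 0$, and then bound the explicit Riccati integral below by $C/\log t$. The one step where you differ is in fact the more careful one: the paper controls the denominator $1-2\imagpart\nu(\xi)|A(0,\xi)|^2\log t$ via the global constant $c_*=-\sup_{\xi}\imagpart\nu(\xi)$, which only bounds it from \emph{below} (since $-\imagpart\nu(\xi)\ge c_*$) and hence gives the inequality in \eqref{est_lower_u_str} the wrong way unless $\imagpart\nu$ is constant, whereas your local constant $D=\sup_{\xi\in I}(-\imagpart\nu(\xi))$ on the compact interval $I$ is exactly the upper bound the argument needs.
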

%--------------------------------------

%----------------
\begin{rmk}
For the power-type nonlinearity case \eqref{nls_1}, similar result 
has been obtained by Kita-Sato~\cite{KitaSato}. In other words, 
Theorem~\ref{thm_lower_str} is an extension of their result to derivative 
nonlinearity case.
\end{rmk}
%---------------

\begin{proof}
We set $c_*=-\sup_{\xi\in \R} \imagpart \nu(\xi)$. 
Suppose that $\hat{\psi}(\xi_*)\ne 0$ at some $\xi_*\in\R$. 
Then, in exactly the same way as the proof of Theorem~\ref{thm_lower}, 
we have 
\begin{align}
\label{est_lower_u_str}
\|u(t)\|_{L^2_x}
\ge
\left(\int_{\R} 
\frac{|A(0,\xi)|^2}{ 1+2c_* |A(0,\xi)|^2 \log t}  \, d\xi\right)^{1/2}
-\frac{C\eps^2}{t^{\kappa-\delta}}
\end{align}
if $\eps$ is suitably small. 
Also we can take an open interval $I=(\xi_*-r,\xi_*+r)$ 
with small $r>0$ such that 
\begin{align*}
 \inf_{\xi \in I}|A(0,\xi)|
 &\ge 
 \frac{\eps|\hat{\psi}(\xi_*)|}{4}>0.
\end{align*}
With $M=\sup_{\xi\in I} |A(0,\xi)|$, we obtain
\begin{align}
\int_{\R} \frac{|A(0,\xi)|^2}{ 1+2c_* |A(0,\xi)|^2 \log t}  \, d\xi
\ge
\int_{I} 
\frac{\eps^2|\hat{\psi}(\xi_*)|^2}{16( 1+2c_* M^2 \log t)}  \, d\xi
=
\frac{r \eps^2|\hat{\psi}(\xi_*)|^2}{8( 1+2c_* M^2 \log t)}.
\label{est_lower_A0_str}
\end{align}
By \eqref{est_lower_u_str} and \eqref{est_lower_A0_str}, we arrive at 
the desired lower estimate.
\end{proof}

%----------------
\begin{rmk} \label{rmk_upper_str}
It would be natural to ask if the sharp upper bound could be also obtained 
by the similar approach. However, it is not trivial at all to specify the 
decay rate as $t\to +\infty$ of the integral 
\[
\int_{\R} \frac{|A(0,\xi)|^2}{ 1-2\imagpart \nu(\xi)|A(0,\xi)|^2 \log t}\, d\xi
\]
in general, because it depends essentially on behavior of $|A(0,\xi)|$ as 
$|\xi|\to \infty$. Going back to the $x$-side, this corresponds to regularity 
of the initial data 
(see e.g., \cite{Sat1} and the references cited therein for the details 
on this issue). 
This should be contrasted with the weakly dissipative case, and 
the authors have no idea so far how to handle the strictly 
dissipative case generally.
\end{rmk}
%---------------

%------------------------------
\appendix\def\thesection{Appendix}
\section{Some backgrounds on $\nu(\xi)$}  
\label{sec_app_a}
%------------------------------
We shall give a few comments on  the backgrounds on $\nu(\xi)$. 
Without loss of generality, the cubic nonlinear term $N$ satisfying 
\eqref{weak_gi} can be written explicitly as
\begin{align*}
 N(u,u_x)=
 &a_1 u^2 u_x +a_2 u u_x^2 +a_3 u_x^3\\
 &+ b_1 \cc{u^2 u_x} + b_2 \cc{u u_x^2} 
 + b_3 \cc{u_x^3}
 \\
 &+
 c_1 \cc{u^2} u_x +  c_2  |u|^2 \cc{u_x} 
 +
 c_3 u \cc{u_x^2} + c_4 |u_x|^2 \cc{u} + c_5 |u_x|^2 \cc{u_x}\\
&+
\lambda_1 |u|^2 u + \lambda_2 |u|^2 u_x +\lambda_3 u^2\cc{u_x}
+
\lambda_4 |u_x|^2 u + \lambda_5 \cc{u} u_x^2 + \lambda_6 |u_x|^2 u_x 
\end{align*}
with suitable coefficients $a_j$, $b_j$, $c_j$, $\lambda_j \in \C$. 
%Note that only the three terms $u^3$, $\cc{u}^3$, $u\cc{u}^2$ are 
%excluded from all possible cubic interactions. 
With this expression of $N$, we have
\begin{align*}
N(e^{i\theta},i\xi e^{i\theta})
=&
(ia_1\xi -a_2\xi^2 -ia_3\xi^3)e^{i3\theta}\\
&+
(-ib_1\xi -b_2\xi^2 +ib_3\xi^3)e^{-i3\theta}\\
&+
(ic_1\xi-ic_2\xi -c_3\xi^2+c_4\xi^2-ic_5\xi^3)e^{-i\theta}\\
&+
(\lambda_1 +i\lambda_2\xi -i\lambda_3\xi +\lambda_4\xi^2
-\lambda_5\xi^2+i\lambda_6 \xi^3)e^{i\theta},
\end{align*}
whence 
\begin{align*}
\nu(\xi)
=&
\frac{1}{2\pi}\int_0^{2\pi} N(e^{i\theta},i\xi e^{i\theta})
e^{-i\theta}\, d\theta\\
=&
\lambda_1 +i(\lambda_2 -\lambda_3)\xi +(\lambda_4-\lambda_5)\xi^2
+i\lambda_6 \xi^3.
\end{align*}
We see that $\nu(\xi)$ depends only on the coefficients of 
the gauge-invariant terms. In this sense, $\nu(\xi)$ extracts 
the contribution from the gauge-invariant part of $N$, and 
in particular, its imaginary part is expected to have something 
to do with the dissipativity of the nonlinear terms. 
As pointed out in \cite{Su2}, \cite{Su3}, \cite{SagSu}, \cite{LiS}, 
\cite{LNSS3}, etc., 
it is the case at the level of the profile equation. 

%--------------------------------------------------------------------
\medskip
\subsection*{Acknowledgments}
The authors are grateful to Dr.Takuya Sato for useful conversations on 
his recent works \cite{Sat1}, \cite{Sat2}, \cite{KitaSato}, which motivate the
present work. 
Thanks are also due to Professors Satoshi Masaki, Soichiro Katayama 
and Mamoru Okamoto for their helpful comments on the authors' preceding work 
\cite{LNSS3}. 

The work of C.~L. is supported by Education 
Department of Jilin Province of China (No.~JJKH20220527KJ). 
The work of Y.~N. is supported by JSPS Grant-in-Aid for JSPS Fellows. 
%(????????). 
The work of H.~S. is supported by JSPS KAKENHI (21K03314). 
This work is partly supported by 
Osaka Central Advanced Mathematical Institute, Osaka Metropolitan University 
(MEXT Joint Usage/Research Center on Mathematics and Theoretical 
Physics JPMXP0619217849). 
%--------------------------------------------------------------------

%%%%%%%%%%%%%%%%%%%%%%%%%%%%%%%%%%%%%%%%%%%%%%%%%%%%%%%%%%%%%%%%%%%%%%%%%%%%%%%

%%%%%%%%%%%%%%%%%%%%%%%%%%%%%%%%%%%%%%%%%%%%%%%%%%%%%%%%%%%%%%%%%%%%%%%%%%%%%%%
\end{document}